%
%
%
%
%

%
\documentclass[smallextended]{svjour3}     
\smartqed  
\usepackage{graphicx}
%
%
\usepackage{amssymb,latexsym}
%

\newcommand{\la}{\lambda}

\newcommand{\al}{\alpha}

\newcommand{\be}{\beta}

\newcommand{\refeq}[1]{(\ref{eqn:#1})}
\newcommand{\labt}[1]{\label{thm:#1}}
\newcommand{\reft}[1]{Theorem~\ref{thm:#1}}
\newcommand{\labl}[1]{\label{lemma:#1}}
\newcommand{\refl}[1]{Lemma~\ref{lemma:#1}}

\newcommand{\labd}[1]{\label{definition:#1}}
\newcommand{\refd}[1]{Definition~\ref{definition:#1}}
\newcommand{\labc}[1]{\label{coro:#1}}
\newcommand{\refc}[1]{Corollary~\ref{coro:#1}}

\newcommand{\e}{\epsilon}
\newcommand{\ob}{\frac {1} {b}}
\newcommand{\fsz}{{\partial f_i\over \partial z}}
\newcommand{\fsw}{{\partial f_i\over \partial w}}
\newcommand{\pbw}{P_{b,w}}
\newcommand{\gbb}{g_b(B)}
\newcommand{\ekn}{(\e,k,\nu_b)}

%
\journalname{Monatsh Math}
\begin{document}

\title{Cantor Series Constructions Contrasting Two Notions of Normality}


\author{C. Altomare        \and
        B. Mance
}


\institute{C. Altomare \at
              Department of Mathematics \\
              The Ohio State University \\
              231 West 18th Avenue \\
              Columbus, OH 43210-1174
              Tel.: (614) 292-1923\\
              Fax: (614) 292-1479\\
              \email{altomare@math.ohio-state.edu}           
           \and
B. Mance \at
              Department of Mathematics \\
              The Ohio State University \\
              231 West 18th Avenue \\
              Columbus, OH 43210-1174
              Tel.: (614) 292-6063\\
              Fax: (614) 292-1479\\
              \email{mance@math.ohio-state.edu}           
}

\date{Received: date / Accepted: date}

\maketitle

\begin{abstract}
A. R\'enyi \cite{Renyi} made a definition that gives a generalization of simple
normality in the context of $Q$-Cantor series.
In \cite{Mance}, a definition of $Q$-normality
was given that generalizes the
notion of normality in the context of $Q$-Cantor series.
In this work, we examine both $Q$-normality and $Q$-distribution normality, treated in \cite{Laffer} and \cite{Salat}.
Specifically, while the non-equivalence of these two notions is implicit in \cite{Laffer},
in this paper, we give an explicit construction witnessing the nontrivial
direction. That is, we construct a base $Q$ as well as a real $x$ that
is $Q$-normal yet not $Q$-distribution normal.
We next approach the topic of simultaneous normality by constructing an
explicit example of a base $Q$ as well as a real $x$ that is
both $Q$-normal and $Q$-distribution normal.

\keywords{Cantor series \and Normal numbers \and Uniform distribution}
\subclass{11K16 \ and 11A63}
\end{abstract}

\section{Introduction}
\label{sec:1}

\begin{definition}\labd{1.1} Let $b$ and $k$ be positive integers.  A {\it block of length $k$ in base $b$} is an ordered $k$-tuple of integers in $\{0,1,\ldots,b-1\}$.  A {\it block of length $k$} is a block of length $k$ in some base $b$.  A {\it block} is a block of length $k$ in base $b$ for some integers $k$ and $b$.
\end{definition}

Given a block $B$, $|B|$ will represent the length of $B$. Given non-negative integers $l_1,l_2,\ldots,l_n$, at least one of which is positive, and blocks $B_1,B_2,\ldots,B_n$, the block
\begin{equation}
B=l_1B_1 l_2B_2 \ldots l_n B_n
\end{equation}
will be the block of length $l_1 |B_1|+\ldots+l_n |B_n|$ formed by concatenating $l_1$ copies of $B_1$, $l_2$ copies of $B_2$, through $l_n$ copies of $B_n$.  For example, if $B_1=(2,3,5)$ and $B_2=(0,8)$, then $2B_1 1B_20B_2=(2,3,5,2,3,5,0,8)$.

\begin{definition}\labd{1.2} Given an integer $b \geq 2$, the {\it $b$-ary expansion} of a real $x$ in $[0,1)$ is the (unique) expansion of the form
\begin{equation} \label{eqn:bary} 
x=\sum_{n=1}^{\infty} \frac {E_n} {b^n}=0.E_1 E_2 E_3 \ldots
\end{equation}
such that $E_n$ is in $\{0,1,\ldots,b-1\}$ for all $n$ with $E_n \neq b-1$ infinitely often.
\end{definition}

Denote by $N_n^b(B,x)$ the number of times a block $B$ occurs with its starting position no greater than $n$ in the $b$-ary expansion of $x$.

\begin{definition}\labd{1.3} A real number $x$ in $[0,1)$ is {\it normal in base $b$} if for all $k$ and blocks $B$ in base $b$ of length $k$, one has
\begin{equation} \label{eqn:bnormal1}
\lim_{n \rightarrow \infty} \frac {N_n^{b}(B,x)} {n}=b^{-k}.
\end{equation}
A number $x$ is {\it simply normal in base $b$} if \refeq{bnormal1} holds for $k=1$.
\end{definition}

 Borel introduced normal numbers in 1909 and proved that almost all (in the sense of Lebesgue measure) real numbers in $[0,1)$ are normal in all bases.  The best known example of a number that is normal in base $10$ is due to Champernowne \cite{Champernowne}.  The number
$$
H_{10}=0.1 \ 2 \ 3 \ 4 \ 5 \ 6 \ 7 \ 8 \ 9 \ 10  \ 11 \ 12 \ldots ,
$$
formed by concatenating the digits of every natural number written in increasing order in base $10$, is normal in base $10$.  Any $H_b$, formed similarly to $H_{10}$ but in base $b$, is known to be normal in base $b$. Since then, many examples have been given of numbers that are normal in at least one base.  One  can find a more thorough literature review in \cite{DT} and \cite{KuN}.

The $Q$-Cantor series expansion, first studied by Georg Cantor, is a natural generalization of the $b$-ary expansion.

\begin{definition}\labd{1.4} $Q=\{q_n\}_{n=1}^{\infty}$ is a {\it basic sequence} if each $q_n$ is an integer greater than or equal to $2$.
\end{definition}

\begin{definition}\labd{1.5} Given a basic sequence $Q$, the {\it $Q$-Cantor series expansion} of a real $x$ in $[0,1)$ is the (unique) expansion of the form
\begin{equation} \label{eqn:cseries}
x=\sum_{n=1}^{\infty} \frac {E_n} {q_1 q_2 \ldots q_n}
\end{equation}
such that $E_n$ is in $\{0,1,\ldots,q_n-1\}$ for all $n$
with $E_n \neq q_n-1$ infinitely often.
\end{definition}

Clearly, the $b$-ary expansion is a special case of \refeq{cseries} where $q_n=b$ for all $n$.  If one thinks of a $b$-ary expansion as representing an outcome of repeatedly rolling a fair $b$-sided die, then a $Q$-Cantor series expansion may be thought of as representing an outcome of rolling a fair $q_1$ sided die, followed by a fair $q_2$ sided die and so on.  For example, if $q_n=n+1$ for all $n$, then the $Q$-Cantor series expansion of $e-2$ is
$$
e-2=\frac{1} {2}+\frac{1} {2 \cdot 3}+\frac{1} {2 \cdot 3 \cdot 4}+\ldots
$$
If $q_n=10$ for all $n$, then the $Q$-Cantor series expansion for $1/4$ is
$$
\frac {1} {4}=\frac{2} {10}+\frac {5} {10^2}+\frac {0} {10^3}+\frac {0} {10^4}+\ldots
$$

For a given basic sequence $Q$, let $N_n^Q(B,x)$ denote the number of times a block $B$ occurs starting at a position no greater than $n$ in the $Q$-Cantor series expansion of $x$. Additionally, define
\begin{equation} \label{eqn:Qnk}
Q_n^{(k)}=\sum_{j=1}^n \frac {1} {q_j q_{j+1} \ldots q_{j+k-1}}.
\end{equation}

A. R\'enyi \cite{Renyi} defined a real number $x$ to be normal with respect to $Q$ if for all blocks $B$ of length $1$,
\begin{equation}\label{eqn:rnormal}
\lim_{n \rightarrow \infty} \frac {N_n^Q (B,x)} {Q_n^{(1)}}=1.
\end{equation}
If $q_n=b$ for all $n$, then \refeq{rnormal} is equivalent to {\it simple normality in base $b$}, but not equivalent to {\it normality in base $b$}.  Thus, we want to generalize normality in a way that is equivalent to normality in base $b$ when all $q_n=b$.

\begin{definition}\labd{1.7} A real number $x$ is {\it $Q$-normal of order $k$} if for all blocks $B$ of length $k$,
\begin{equation}
\lim_{n \rightarrow \infty} \frac {N_n^Q (B,x)} {Q_n^{(k)}}=1.
\end{equation}
We say that $x$ is {\it $Q$-normal} if it is $Q$-normal of order $k$ for all $k$.
\end{definition}

We make the following definitions:

\begin{definition}\labd{1.8} A basic sequence $Q$ is {\it $k$-divergent} if
\begin{equation}
\lim_{n \rightarrow \infty} Q_n^{(k)}=\infty.
\end{equation}
$Q$ is {\it fully divergent} if $Q$ is $k$-divergent for all $k$.
\end{definition}

\begin{definition}\labd{1.6} A basic sequence $Q$ is {\it infinite in limit} if $q_n \rightarrow \infty$.
\end{definition}

For $Q$ that are infinite in limit,
it has been shown that the set of all $x$ in $[0,1)$ that are $Q$-normal of order $k$ has full Lebesgue measure if and only if $Q$ is $k$-divergent \cite{Renyi}.  Therefore if $Q$ is infinite in limit, then the set of all $x$ in $[0,1)$ that are $Q$-normal has full Lebesgue measure if and only if $Q$ is fully divergent.  Additionally, given an arbitrary non-negative integer $a$, F. Schweiger \cite{Sch} proved that for almost every $x$ with $\epsilon > 0$, one has
$$
N_n( (a), x ) = Q_n^{(1)} + O\left( \sqrt{Q_n^{(1)}} \cdot  \log^{3/2+\epsilon} Q_n^{(1)}     \right).
$$

It is more difficult to construct specific examples of $Q$-normal numbers than it is to show that the typical real number is $Q$-normal.  This is similar to the case of the $b$-ary expansion.  The situation is more complicated when $Q$ is infinite in limit as we  need to consider blocks whose digits come from an infinite set.

For example, normality can be defined for the continued fraction expansion,  which involves an infinite digit set.  While it is known that almost every real number is normal with respect to the continued fraction expansion, there are not many known examples (see \cite{AKS} and \cite{PostPyat}).

Generally speaking, it is more difficult to give explicit constructions
of normal numbers (for various notions of normality) than it is to give
typicality results. In \cite{Mance}, the second author gave an explicit construction
of a basic sequence $Q$ and a real number $x$ such that $x$ is $Q$-normal.
For the same $Q$ and $x$, we show that
$x$ is $Q$-distribution normal, a term we now define. First, we must define
$T_{Q,n}(x)$.

\begin{definition}
Let~$x$ be a number in $[0,1)$ and let~$Q$ be a basic sequence,
then $T_{Q,n}(x)$ is defined as
$$q_1\cdots q_n x\pmod{1}.$$
\end{definition}

\begin{definition}
A number~$x$ in $[0,1)$ is {\it $Q$-distribution normal} if
the sequence $\{T_{Q,n}(x)\}_{n=0}^\infty$ is uniformly distributed in $[0,1)$.
\end{definition}

Note that in base~$b$, where $q_n=b$ for all $n$,
 the notions of $Q$-normality and
$Q$-distribution normality are equivalent. This equivalence
is the most basic and fundamental fact in the study of normality
in base $b$. It is surprising that this
equivalence breaks down in the more general context of $Q$-Cantor series
for general $Q$.

\section{Block Friendly Families}
\label{sec:2}

We will state a theorem that allows us to construct specific examples of $Q$-normal numbers for certain $Q$.  We first need several definitions:

\begin{definition}\labd{1.9}\footnote{\cite{Post} discusses normality in base $2$ with respect to different weightings.} A {\it weighting} $\mu$ is a collection of functions $\mu^{(1)},\mu^{(2)},\mu^{(3)},\ldots$ with $\sum_{j=0}^{\infty} \mu^{(1)}(j)=1$ such that for all $k$, $\mu^{(k)}:\{0,1,2,\ldots\}^k \rightarrow [0,1]$ and $\mu^{(k)}(b_1,b_2,\ldots,b_k)=\sum_{j=0}^{\infty} \mu^{(k+1)}(b_1,b_2,\ldots,b_k,j)$.
\end{definition}

\begin{definition}\labd{1.10} The {\it uniform weighting in base $b$} is the collection $\lambda_b$ of functions  $\lambda_b^{(1)},\lambda_b^{(2)},\lambda_b^{(3)},\ldots$ such that for all $k$ and blocks $B$ of length $k$ in base $b$
\begin{equation}
\lambda_b^{(k)}(B)=b^{-k}.
\end{equation}
\end{definition}

\begin{definition}\labd{1.11} Let $p$ and $b$ be positive integers such that $1 \leq p \leq b$.  A weighting $\mu$ is {\it $(p,b)$-uniform} if for all $k$ and blocks $B$ of length $k$ in base $p$, we have
\begin{equation}
\mu^{(k)}(B)=\lambda_b^{(k)}(B)=b^{-k}.
\end{equation}
\end{definition}

Given blocks $B$ and $y$, let $N(B,y)$ be the number of
occurrences of the block~$B$ in the block~$y$.

\begin{definition}\labd{1.12} Let $\epsilon$ be a real number such that $0 < \epsilon < 1$ and let $k$ be  a positive integer.
Assume that $\mu$ is a weighting.  A block of digits $y$ is {\it $(\epsilon,k,\mu)$-normal }\footnote{\refd{1.12} is a generalization of the concept of $(\epsilon,k)$-normality, originally due to Besicovitch \cite{Besicovitch}.} if for all blocks $B$ of length $m \leq k$, we have
\begin{equation}
\mu^{(m)}(B)|y|(1-\epsilon) \le N(B,y) \le \mu^{(m)}(B)|y|(1+\epsilon).
\end{equation}
\end{definition}

For the rest of the paper we use the following conventions freely and without comment.  Given sequences of non-negative integers $\{l_i\}_{i=1}^\infty$ and $\{b_i\}_{i=1}^\infty$ with each $b_i\ge 2$
and a sequence of blocks $\{x_i\}_{i=1}^\infty$, we set
\begin{equation}
L_i=|l_1 x_1 \ldots l_i x_i|=\sum_{j=1}^i  l_j |x_j|,
\end{equation}
\begin{equation}
q_n=b_i \textrm{\ for $L_{i-1} < n \leq L_i$},
\end{equation}
and
\begin{equation}
Q=\{q_n\}_{n=1}^{\infty}.
\end{equation}
Moreover, if $(E_1,E_2,\ldots)=l_1x_1 l_2 x_2 \ldots$, we set
\begin{equation}
x=\sum_{n=1}^{\infty} \frac {E_n} {q_1 q_2 \ldots q_n}.
\end{equation}

Given $\{q_n\}_{n=1}^{\infty}$ and $\{l_i\}_{i=1}^{\infty},$ it is always
assumed that $x$ and $Q$ are given by the formulas above. 
We make the following definition of a block friendly family (BFF):

\begin{definition}\labd{1.13} A {\it BFF} is a $6$-tuple $W=\{(l_i,b_i,p_i,\epsilon_i,k_i,\mu_i)\}_{i=1}^{\infty}$ with non-decreasing sequences of non-negative integers $\{l_i\}_{i=1}^{\infty}$, $\{b_i\}_{i=1}^{\infty}$, $\{p_i\}_{i=1}^{\infty}$ and $\{k_i\}_{i=1}^{\infty}$, for which $b_i \geq 2$, $b_i \rightarrow \infty$ and $p_i \rightarrow \infty$, such that $\{\mu_i\}_{i=1}^{\infty}$ is a sequence of $(p_i,b_i)$-uniform weightings and $\{\epsilon_i\}_{i=1}^{\infty}$ strictly decreases to $0$.
\end{definition}

\begin{definition}\labd{1.14} Let $W=\{(l_i,b_i,p_i,\epsilon_i,k_i,\mu_i)\}_{i=1}^{\infty}$ be a BFF.  If $\lim k_i=K<\infty$, then let $R(W)=\{0,1,2,\ldots,K\}$. Otherwise, let $R(W)=\{0,1,2,\ldots\}$.
A sequence $\{x_i\}_{i=1}^\infty$ of $(\e_i,k_i,\mu_i)$-normal blocks of
non-decreasing length
is said to be {\it $W$-good} if for all $k$ in $R$,
the following three conditions hold:
\begin{equation}\label{eqn:good1}
\frac {b_i^k} {\epsilon_{i-1}-\epsilon_i}=o(|x_i|);
\end{equation}
\begin{equation}\label{eqn:good2}
\frac {l_{i-1}} {l_i} \cdot \frac {|x_{i-1}|} {|x_i|}=o(i^{-1}b_i^{-k});
\end{equation}
\begin{equation}\label{eqn:good3}
\frac {1} {l_i} \cdot \frac {|x_{i+1}|} {|x_i|}=o(b_i^{-k}).
\end{equation}
\end{definition}

We now state a key theorem of \cite{Mance}.

\begin{theorem}\labt{oldmain} Let $W$ be a BFF and $\{x_i\}_{i=1}^{\infty}$ a $W$-good sequence. If~$k~\in~R(W)$, then $x$ is $Q$-normal of order $k$.  If $k_i \rightarrow \infty$, then $x$ is $Q$-normal.
\end{theorem}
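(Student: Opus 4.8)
The plan is to fix $k \in R(W)$ and a single block $B$ of length $k$, and to show $N_n^Q(B,x)/Q_n^{(k)} \to 1$; the fully $Q$-normal conclusion when $k_i \to \infty$ is then immediate, since in that case $R(W)$ is all of $\{0,1,2,\ldots\}$. First I would record the two facts that make the digit blocks $x_i$ usable: because $p_i \to \infty$ and $\{k_i\}$ is non-decreasing with $\lim k_i \ge k$, for all large $i$ the block $B$ is a block in base $p_i$ and $k \le k_i$, so $(p_i,b_i)$-uniformity ($\mu_i^{(k)}(B) = b_i^{-k}$) together with $(\epsilon_i,k_i,\mu_i)$-normality of $x_i$ yields
\[
b_i^{-k}|x_i|(1-\epsilon_i) \le N(B,x_i) \le b_i^{-k}|x_i|(1+\epsilon_i).
\]
Write $M_j = l_j |x_j| b_j^{-k}$ for the expected number of occurrences of $B$ in the $j$-th stage (the $l_j$ copies of $x_j$) and $S_i = \sum_{j\le i} M_j$. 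The strategy is twofold: (i) show both $N_{L_i}^Q(B,x)$ and $Q_{L_i}^{(k)}$ are asymptotic to $S_i$, giving the limit along the subsequence $n = L_i$; and (ii) upgrade to all $n$ by controlling what happens inside a single stage.

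For step (i), I would first estimate $Q_{L_i}^{(k)}$. A starting index $m$ with $[m,m+k-1]$ lying in a single stage $j$ contributes exactly $b_j^{-k}$, and there are $l_j|x_j| - O(k)$ such $m$ in stage $j$; the remaining $O(k)$ indices per stage boundary contribute a product of size at least $b_j^k$, hence a term at most $b_j^{-k}$. Thus $Q_{L_i}^{(k)} = S_i + O\!\left(k\sum_{j\le i} b_j^{-k}\right)$, and since each $M_j \ge |x_j|b_j^{-k} \to \infty$ (by \refeq{good1}, which forces $b_j^k/|x_j| \to 0$), the error is negligible and $Q_{L_i}^{(k)} \sim S_i$. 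For $N_{L_i}^Q(B,x)$, each occurrence is either internal to a single copy of some $x_j$ --- contributing $\sum_{j\le i} l_j N(B,x_j) = \sum_{j\le i} M_j(1+O(\epsilon_j))$ --- or it straddles one of the $O(l_j)$ copy boundaries in stage $j$, contributing $O(l_j k)$; the straddle total is again negligible relative to $M_j$ by \refeq{good1}. The decisive point is that \refeq{good2} makes the stages grow geometrically fast: applied at index $j+1$ it gives $M_j/M_{j+1} = o\!\left((j+1)^{-1} b_j^{-k}\right) \to 0$, whence $S_{i-1} = o(M_i)$ and the last stage dominates. Consequently $\sum_{j\le i}\epsilon_j M_j \le \epsilon_i M_i + S_{i-1} = o(S_i)$, so $N_{L_i}^Q(B,x) = S_i(1+o(1))$ and $N_{L_i}^Q(B,x)/Q_{L_i}^{(k)} \to 1$.

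The main obstacle is step (ii), extending the limit to arbitrary $n$ with $L_{i-1} < n < L_i$, because such an $n$ can land in the middle of a copy of $x_i$, and prefixes of $x_i$ carry no normality guarantee; worse, $Q_n^{(k)}$ swells from $\sim S_{i-1}$ to $\sim S_i \gg S_{i-1}$ across the stage, so a crude monotonicity squeeze between $L_{i-1}$ and $L_i$ fails. This is exactly what \refeq{good3} is for: applied at index $i-1$ it gives $|x_i| = o(l_{i-1}|x_{i-1}|b_{i-1}^{-k}) = o(S_{i-1})$, so the contribution of a single copy of $x_i$ to either $N$ or $Q^{(k)}$ is $o(S_{i-1})$, hence $o(Q_n^{(k)})$ since $Q_n^{(k)} \ge Q_{L_{i-1}}^{(k)} \sim S_{i-1}$. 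I would therefore split the count at such an $n$ into the value at $L_{i-1}$ (relative error $o(1)$ by step (i)), the complete copies of $x_i$ already finished (relative error $\le \epsilon_i$ from normality, plus straddles killed by \refeq{good1}), and the single incomplete copy (absolute error $O(|x_i|) = o(S_{i-1}) = o(Q_n^{(k)})$). Summing these bounds shows $|N_n^Q(B,x) - Q_n^{(k)}|/Q_n^{(k)} \le o(1) + \epsilon_i + o(1) \to 0$ uniformly as $n \to \infty$, which finishes the proof.
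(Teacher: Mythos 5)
The paper does not actually prove \reft{oldmain}: it is quoted from \cite{Mance} with no argument given here, so there is no in-text proof to compare against. Judged on its own, your proposal is a correct and essentially complete proof, and it is the natural one: the decomposition of occurrences into those internal to a copy of some $x_j$ plus those straddling a boundary is exactly the device this paper itself deploys in \refl{nl}, and you use each hypothesis for precisely the purpose it was built for --- \refeq{good1} to kill the $O(l_j k)$ straddling count and the $O(k b_j^{-k})$ boundary terms of $Q_{L_i}^{(k)}$ relative to $M_j=l_j|x_j|b_j^{-k}$, \refeq{good2} to force $S_{i-1}=o(M_i)$ so that $\sum_j \epsilon_j M_j=o(S_i)$, and \refeq{good3} to make a single copy of $x_i$ (the uncontrolled partial block) cost only $o(S_{i-1})=o(Q_n^{(k)})$, which is the one genuinely delicate point in passing from $n=L_i$ to general $n$. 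Two small matters you leave implicit but which are harmless: for the finitely many indices $j$ with $p_j$ too small, $B$ need not be a block in base $p_j$ and the normality estimate for $x_j$ is unavailable, but those stages contribute a bounded count that is absorbed into the $o(S_i)$ error; and the case $k=0$ in $R(W)$ is degenerate since $N_n^Q(B,x)=Q_n^{(0)}=n$. With those remarks added, the argument stands as a self-contained proof of the cited theorem.
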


\section{$Q$-Normality Without $Q$-Distribution Normality: A Construction}
\label{sec:3}

In this section, we construct a specific example of
a basic sequence~$Q$ and a real number~$x$ such that~$x$ is $Q$-normal
yet not $Q$-distribution normal. Moreover, the $Q$-distribution
normality of~$x$ fails in a particularly strong fashion. 
Not only does $\{T_{Q,n}(x)\}_{n=1}^\infty$
fail to be u.d. mod $1$, but $\lim_{n\to\infty}T_{Q,n}(x)=0$.

We will use the following theorem of \cite{Salat}:

\begin{theorem}\labt{Salat}
Given a basic sequence $Q$ and a real number $x$ with
$Q$-Cantor series expansion
$x=\sum_{n=1}^\infty {E_n\over q_1\cdots q_n}$; if
$$\lim_{N \to\infty}{1\over N}\sum_{n=1}^N {1\over q_n}=0,$$
then
$x$ is $Q$-distribution normal iff
$$\left\{ {E_n\over q_n}\right\}_{n=1}^\infty$$
is u.d. mod $1$.
\end{theorem}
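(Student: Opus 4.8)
The plan is to compute $T_{Q,n}(x)$ explicitly, observe that its $n$-th term sits within $q_{n+1}^{-1}$ of $E_{n+1}/q_{n+1}$, and then invoke Weyl's criterion together with the averaging hypothesis to show that the two sequences share identical uniform-distribution behaviour.

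First I would unwind the definition $T_{Q,n}(x)=q_1\cdots q_n x\pmod 1$. Multiplying the $Q$-Cantor series by $q_1\cdots q_n$ turns every summand with $m\le n$ into an integer, so modulo $1$ only the tail survives:
$$T_{Q,n}(x)=\sum_{m=n+1}^\infty \frac{E_m}{q_{n+1}\cdots q_m},$$
and this already lies in $[0,1)$ since it is itself a $Q$-Cantor tail. Isolating the leading summand gives $T_{Q,n}(x)=\frac{E_{n+1}}{q_{n+1}}+r_n$, where $0\le r_n=\sum_{m=n+2}^\infty \frac{E_m}{q_{n+1}\cdots q_m}<\frac{1}{q_{n+1}}$ by the bound $E_m\le q_m-1$. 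After relabeling so that both sequences are indexed from $n=1$ (passing from $\{T_{Q,n}(x)\}_{n=0}^\infty$ to $\{T_{Q,n-1}(x)\}_{n=1}^\infty$, which is the same sequence and hence has the same u.d.\ status), this says the $n$-th terms of $\{T_{Q,n-1}(x)\}$ and of $\{E_n/q_n\}$ differ in absolute value by less than $q_n^{-1}$.

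Next I would pass to exponential sums. For a fixed nonzero integer $h$ the elementary estimate $|e^{2\pi i h u}-e^{2\pi i h v}|\le 2\pi|h|\,|u-v|$ yields
$$\left| \frac{1}{N}\sum_{n=1}^N e^{2\pi i h\, T_{Q,n-1}(x)} -\frac{1}{N}\sum_{n=1}^N e^{2\pi i h\, E_n/q_n}\right| \le \frac{2\pi|h|}{N}\sum_{n=1}^N \frac{1}{q_n}.$$
By the standing hypothesis $\frac{1}{N}\sum_{n=1}^N q_n^{-1}\to 0$, the right-hand side tends to $0$ for every fixed $h$. Hence the two Weyl sums have the same limiting behaviour, and by Weyl's criterion (see \cite{KuN}) one sequence is u.d.\ mod $1$ exactly when the other is. Since $x$ being $Q$-distribution normal means precisely that $\{T_{Q,n}(x)\}$ is u.d.\ mod $1$, this is the asserted equivalence.

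The only genuinely delicate point is that the two sequences need \emph{not} be termwise close: $q_n$ may stay bounded, so $r_n$ need not vanish. What rescues the argument is that we require closeness only in Cesàro average, which is exactly what the hypothesis supplies; the averaged form of Weyl's criterion then converts this average control into an equivalence of uniform distribution. Had one instead assumed $q_n\to\infty$, the simpler observation that the tails of the two sequences agree would already suffice, but the theorem is stated under the weaker averaging condition, so the exponential-sum estimate above is the essential ingredient.
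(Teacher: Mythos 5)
The paper does not actually prove this statement: it is quoted as a known theorem from \cite{Salat} and used as a black box, so there is no in-paper argument to compare yours against. Your blind proof is nevertheless correct and self-contained. The identity $T_{Q,n}(x)=\sum_{m=n+1}^{\infty}E_m/(q_{n+1}\cdots q_m)$ and the decomposition $T_{Q,n}(x)=E_{n+1}/q_{n+1}+r_n$ with $0\le r_n<1/q_{n+1}$ are right; the tail bound rests on $E_m\le q_m-1$ together with the telescoping identity $\sum_{m\ge k}(q_m-1)/(q_k\cdots q_m)=1$, and the inequality is strict because $E_m\ne q_m-1$ infinitely often, which also justifies your remark that the tail lies in $[0,1)$. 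The passage from closeness of the two sequences in average to equivalence of their uniform distribution, via the Lipschitz estimate $|e^{2\pi i h u}-e^{2\pi i h v}|\le 2\pi|h|\,|u-v|$ and Weyl's criterion, is the standard mechanism and is applied correctly: the hypothesis $\frac{1}{N}\sum_{n=1}^{N}q_n^{-1}\to 0$ enters exactly where you use it and nowhere else, and the argument is symmetric in the two sequences, so both directions of the equivalence come out at once. Your closing observation --- that termwise closeness can fail when the $q_n$ stay bounded, and that only the averaged closeness is available or needed --- correctly identifies the one delicate point. (An equivalent route would bound the difference of the two discrepancies directly rather than the Weyl sums, but that buys nothing here.) In short, you have supplied a proof where the paper supplies only a citation.
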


It should first be noted that it is easier to construct a basic sequence $Q$ and a real number $x$ that is
$Q$-distribution normal, but not $Q$-normal.  To see this, we let
$$
(E_1,E_2,\ldots)=(1,1,2,1,2,3,1,2,3,4,\ldots) \hbox{ \ and}
$$
$$
(q_1,q_2,\ldots)=(2,3,3,4,4,4,5,5,5,5,\ldots).
$$
Thus, the number $x=\sum_{n=1}^{\infty} \frac {E_n} {q_1 \ldots q_n}$ is not $Q$-normal since none of
the digits $\{E_n\}$ are equal to $0$.  However, $x$ is $Q$-distribution normal by \reft{Salat} since the
sequence $1/2,1/3,2/3,1/4,2/4,3/4,1/5,\ldots$ is u.d. mod $1$.

The construction of a basic sequence $Q$ and a real number $x$ that is $Q$-normal but not $Q$-distribution normal is far more difficult. We will first need to define a sequence of weightings
$\nu_1,\nu_2,\ldots $ and blocks $P_{b,w}$. After this, we will prove
a number of technical lemmas from which the above stated facts follow.

If we let~$b$ be a positive integer, then we define

\begin{displaymath}
\nu_b^{(1)}(j)=\left\{ \begin{array}{ll}
\frac {1} {2^b} & \textrm{if $0 \leq j \leq b-1$}\\
\frac {2^b-b} {2^b} & \textrm{if $j=b$}\\
0		& \textrm{if $j>b$}
\end{array} \right. .
\end{displaymath}
For a block $B=(b_1,\ldots,b_k)$, we define
$$\nu_b^{(k)}(B)=\prod_{j=1}^k \nu_b^{(1)}(b_j).$$

Note that $\nu_b$ is a $(b,2^b)$-uniform weighting. Since each $\nu_b^{(k)}$ is determined
by $\nu_b^{(1)}$, we  refer to $\nu_b^{(k)}$ as $\nu_b$ throughout.

Next, we define $\pbw$. Let~$b$ and~$w$ be positive integers.
Denote by $P_1,P_2,\ldots,P_{(b+1)^w}$ the blocks in base $b+1$
of length~$w$ written in lexicographic order. Let
$$\pbw=
2^{bw}\nu_b(P_1)P_1 2^{bw}\nu_b(P_2)P_2\cdots 2^{bw}\nu_b(P_{(b+1)^w})P_{(b+1)^w}.$$

In order to get upper and lower bounds for $N(B,\pbw)$
for a base $b+1$ block~$B$,
we need to calculate the length of $\pbw$. 
We must first compute $\nu_b(B)$. This calculation is facilitated
by the following definition:

\begin{definition}
Given a base $b+1$ block $B=(b_1,\ldots,b_w)$,
set
$$\gbb=|\{j ; b_j=b\}|.$$
\end{definition}

\begin{lemma}\labl{amount}
If $B$ is a base $b+1$ block of length $w$, then
$$2^{bw}\nu_b(B)=\left(2^b-b \right)^{\gbb}.$$
\end{lemma}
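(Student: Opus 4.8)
The plan is to evaluate the product defining $\nu_b(B)$ directly, organizing the factors according to their two possible values. The crucial preliminary observation is that a base $b+1$ block has all of its digits in $\{0,1,\ldots,b\}$; in particular no digit exceeds $b$, so the vanishing case $\nu_b^{(1)}(j)=0$ (for $j>b$) never occurs among the factors. This guarantees that $\nu_b(B)$ is a product of nonzero factors, each of which is one of exactly two possible values, $1/2^b$ or $(2^b-b)/2^b$.

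Write $B=(b_1,\ldots,b_w)$. By the definition of $\nu_b$ on blocks, $\nu_b(B)=\prod_{j=1}^w \nu_b^{(1)}(b_j)$. I would partition the index set $\{1,\ldots,w\}$ into those $j$ with $b_j=b$ and those $j$ with $b_j\le b-1$. For the former, the definition of $\nu_b^{(1)}$ gives $\nu_b^{(1)}(b_j)=(2^b-b)/2^b$; for the latter it gives $\nu_b^{(1)}(b_j)=1/2^b$. By the definition of $\gbb$ there are exactly $\gbb$ indices of the first type, hence $w-\gbb$ of the second, so the product collapses to $\bigl((2^b-b)/2^b\bigr)^{\gbb}\cdot(1/2^b)^{w-\gbb}$, which simplifies to $(2^b-b)^{\gbb}/2^{bw}$. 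Multiplying through by $2^{bw}$ then yields the claimed identity.

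There is really no serious obstacle here: the statement is a routine bookkeeping computation whose only substantive ingredient is the observation that ``base $b+1$'' caps the digits at $b$, thereby ruling out the zero case of $\nu_b^{(1)}$ and leaving a two-valued product whose value is determined entirely by $\gbb$. The remaining manipulation is elementary algebra of exponents, using $2^{bw}=(2^b)^w$ to cancel the denominator $(2^b)^{w}$ against the factor $2^{bw}$.
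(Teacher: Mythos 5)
Your proof is correct and follows essentially the same route as the paper's, which is the same one-line evaluation $2^{bw}\nu_b(B)=2^{bw}\cdot\left(\frac{2^b-b}{2^b}\right)^{g_b(B)}\cdot\left(\frac{1}{2^b}\right)^{w-g_b(B)}=\left(2^b-b\right)^{g_b(B)}$; you merely spell out the partition of indices and the observation that base $b+1$ digits never trigger the zero case of $\nu_b^{(1)}$, which the paper leaves implicit.
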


\begin{proof}
$$2^{bw}\nu_b(B)=
2^{bw}  \cdot \left({2^b-b\over 2^b}\right)^{\gbb}
 \cdot \left({1\over 2^b}\right)^{w-\gbb} =
\left(2^b-b \right)^{\gbb}.$$\qed
\end{proof}

\begin{lemma}\labl{pbw}
If~$b$ and~$w$ are positive integers, then
$$|\pbw|=w\cdot 2^{bw}.$$
\end{lemma}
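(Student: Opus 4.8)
The plan is to compute the length of $\pbw$ directly from its definition as a sum of the lengths of its constituent pieces. By construction, $\pbw$ is formed by concatenating, for each of the $(b+1)^w$ base-$(b+1)$ blocks $P_r$ of length $w$, exactly $2^{bw}\nu_b(P_r)$ copies of $P_r$. Since each $P_r$ has length $w$, the total length is
\begin{equation}
|\pbw|=\sum_{r=1}^{(b+1)^w} 2^{bw}\nu_b(P_r)\cdot |P_r|=w\sum_{r=1}^{(b+1)^w} 2^{bw}\nu_b(P_r).
\end{equation}
So the entire problem reduces to showing that $\sum_{r} 2^{bw}\nu_b(P_r)=2^{bw}$, i.e.\ that $\sum_r \nu_b(P_r)=1$ where the sum runs over \emph{all} base-$(b+1)$ blocks of length $w$.

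The cleanest way to evaluate this sum is to invoke \refl{amount}, which tells us that $2^{bw}\nu_b(P_r)=(2^b-b)^{g_b(P_r)}$, where $g_b(P_r)$ counts the number of entries of $P_r$ equal to $b$. Summing over all length-$w$ blocks and grouping by the value of $g_b$, there are exactly $\binom{w}{j}b^{w-j}$ blocks with $g_b=j$ (choose which $j$ of the $w$ positions hold the digit $b$, and fill the remaining $w-j$ positions with any of the digits $0,1,\ldots,b-1$). Hence
\begin{equation}
\sum_{r=1}^{(b+1)^w} 2^{bw}\nu_b(P_r)=\sum_{j=0}^w \binom{w}{j}b^{w-j}(2^b-b)^j=\left(b+(2^b-b)\right)^w=2^{bw},
\end{equation}
by the binomial theorem. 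Substituting back yields $|\pbw|=w\cdot 2^{bw}$.

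I do not expect any genuine obstacle here; the result is essentially a bookkeeping identity. The only point requiring a modicum of care is the combinatorial count of blocks with a prescribed number of $b$-digits, together with the recognition that $\nu_b$ is a genuine probability weighting on length-$w$ blocks (so that its total mass is $1$)—indeed, one could bypass the binomial manipulation entirely by noting $\sum_{j=0}^b \nu_b^{(1)}(j)=\frac{b}{2^b}+\frac{2^b-b}{2^b}=1$ and invoking multiplicativity of $\nu_b$ across the $w$ coordinates. Either route gives the same answer, and I would present whichever fits more naturally with the surrounding lemmas, likely the \refl{amount} route since that lemma has just been established.
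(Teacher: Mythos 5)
Your proof is correct and follows essentially the same route as the paper: both invoke \refl{amount}, group the blocks $P_r$ by the number $m$ of digits equal to $b$ (counting $\binom{w}{m}b^{w-m}$ such blocks), and close with the binomial theorem giving $(b+(2^b-b))^w=2^{bw}$. The probabilistic shortcut you mention at the end (total mass of $\nu_b$ equals $1$ by multiplicativity) is a nice observation but not the argument the paper uses.
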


\begin{proof}
Fix~$m$ such that $0\le m\le w$. Clearly, the number of~$i$
such that $g_b(P_i)=m$ is ${w\choose m}b^{w-m}$. By \refl{amount} and the
definition of $\pbw$, each block $P_i$ is
concatenated $\left(2^b-b \right)^m$ times in forming $\pbw$, with each one of
these blocks having length~$w$.  It follows that the total number of digits contained
in all copies of each block~$P_i$ is
$$w\cdot {w\choose m}\cdot \left(2^b-b \right)^m\cdot b^{w-m}.$$

In order to obtain an expression for the length $|\pbw|$ of $\pbw$, we sum over all possible values of~$m$. Therefore
$$|\pbw|=\sum_{m=0}^w w\cdot{w\choose m} \cdot \left(2^b-b \right)^m\cdot b^{w-m}
=w\cdot(2^b-b+b)^w=w\cdot 2^{bw}$$
by the binomial theorem.\qed
\end{proof}

\begin{lemma}\labl{ng}
Let~$w$, $k$, and~$b$ be positive integers such that $k\le w$.
If~$B$ is a block of length~$k$ in base $b+1$, then
$$N(B,\pbw)\ge (w-k+1)\cdot \left(2^b-b \right)^{\gbb}\cdot 2^{b(w-k)}.$$
\end{lemma}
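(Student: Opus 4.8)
The plan is to obtain the lower bound by counting occurrences of the block $B$ that lie \emph{entirely within a single copy} of one of the base $b+1$ blocks $P_i$, and then summing over all the copies of all those $P_i$ that happen to equal $B$ as a sub-block. Crucially, by ignoring all occurrences that straddle the boundary between two consecutive concatenated blocks, I only undercount, which is exactly what is needed for a lower bound. So the inequality will come for free from this restriction, and the main work is to count the guaranteed interior occurrences.

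First I would fix a base $b+1$ block $B$ of length $k \le w$. The key observation is combinatorial: I want to count the base $b+1$ blocks $P_i$ (of length $w$) that contain $B$ as a sub-block starting at a fixed interior position. If I place $B$ starting at position $j$ within a length-$w$ window, there are $w-k+1$ valid choices of starting position $j$ (namely $j = 1, \ldots, w-k+1$), and the remaining $w-k$ slots of the window can be filled arbitrarily with digits from $\{0,1,\ldots,b\}$, giving $(b+1)^{w-k}$ completions. Each such window is exactly one of the $P_i$'s. The subtlety is that a single $P_i$ might contain $B$ more than once, so summing naively over (position, completion) pairs counts occurrences, not distinct blocks $P_i$; but since I am bounding $N(B,\pbw)$, which counts \emph{occurrences}, this is precisely the right quantity to track and no overcounting correction is needed.

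Next I need to weight each such occurrence by how many times its host block $P_i$ is copied in $\pbw$. By the construction of $\pbw$ and by \refl{amount}, the block $P_i$ is repeated $2^{bw}\nu_b(P_i) = (2^b - b)^{g_b(P_i)}$ times. The obstacle here is that this multiplicity depends on $g_b(P_i)$, the number of digits equal to $b$ in $P_i$, which varies across the completions. To get a clean bound I would split the window into the fixed copy of $B$ (contributing exactly $g_b(B)$ digits equal to $b$) and the $w-k$ free slots; then $g_b(P_i) \ge g_b(B)$ always, so each relevant copy is repeated at least $(2^b-b)^{g_b(B)}$ times. This handles the $(2^b-b)^{g_b(B)}$ factor in the claimed bound uniformly.

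Finally I would assemble the count. For each of the $w-k+1$ starting positions there are $(b+1)^{w-k}$ completions, but I only need a lower bound, so it suffices to restrict attention to those completions whose $w-k$ free slots avoid the digit $b$ entirely — there are exactly $b^{w-k}$ of these. Wait, that gives $b^{w-k}$, not the $2^{b(w-k)}$ demanded by the statement; so instead I would \emph{keep all completions} and bound the multiplicity of each more carefully. Summing $(2^b-b)^{g_b(P_i)}$ over all $b^{w-k}$ completions of the free slots, where each free slot independently contributes a factor $(2^b - b)$ if it equals $b$ and a factor $1$ otherwise, yields by the binomial theorem exactly $(2^b-b)^{g_b(B)} \cdot \big((2^b - b) + b\big)^{w-k} = (2^b-b)^{g_b(B)} \cdot 2^{b(w-k)}$ per starting position. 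Multiplying by the $w-k+1$ available positions produces
$$N(B,\pbw) \ge (w-k+1)\cdot (2^b-b)^{g_b(B)}\cdot 2^{b(w-k)},$$
which is the desired inequality. The one point requiring care is confirming that these interior occurrences are genuinely distinct occurrences in $\pbw$ (they are, since they sit in distinct copies or at distinct offsets within a copy), so that summing multiplicities legitimately lower-bounds the true occurrence count $N(B,\pbw)$.
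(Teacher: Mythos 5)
Your proposal is correct and follows essentially the same route as the paper: both restrict to occurrences of $B$ lying entirely inside a single copy of some $P_i$, weight each such occurrence by the multiplicity $(2^b-b)^{g_b(P_i)}$ from \refl{amount}, and evaluate the resulting sum over the $w-k$ free positions via the binomial theorem (your per-slot factorization $\bigl((2^b-b)+b\bigr)^{w-k}$ is the same computation as the paper's sum over $m$ with $\binom{w-k}{m}$ terms). The only blemish is the stray ``$b^{w-k}$ completions'' in your final paragraph, which should read $(b+1)^{w-k}$, but the computation you actually carry out is the correct one.
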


\begin{proof}
$\pbw$ is defined as the concatenation of copies of the
blocks~$P_i=(p_{i,1},\ldots,p_{i,w})$. In order to get
this lower bound on $N(B,\pbw)$ it is enough to show that
the number of occurrences of~$B$ inside some copy of some~$P_i$ is exactly
$$(w-k+1)\cdot \left(2^b-b \right)^{\gbb}\cdot 2^{b(w-k)}.$$

Consider a block $P_i$ containing~$B$. Since~$B$ starts at position~$s$ in~$P_i$ for some~$s$ such
that $1\le s\le w-k+1$, this leaves exactly $w-k$ digits of~$P_i$
undetermined. Let
$$M=\{j ; p_{i,j}=b\hbox{ and } j\not\in[s,s+k-1] \}$$
and let $m=|M|$.
Thus $m$ is the number of times that the block~$P_i$ takes
on the value~$b$ outside of~$B$. Clearly $0\le m\le w-k$.

Note that since $m$ is the number of $b$'s in~$P_i$ outside $B$ and
$\gbb$ is number of $b$'s in~$P_i$ inside $B$, we see that $\gbb+m$ is the
total number of $b$'s in~$P_i$. By \refl{amount}, 
exactly $\left(2^b-b \right)^{\gbb+m}$ copies of $P_i$ are concatenated in forming $\pbw$.
Let $S$ be the total number of occurrences of $B$ in blocks $P_i$ that have exactly $m$ occurrences of $b$ outside of $B$.
Since there are $w-k+1$ choices for $s$, ${w-k\choose m}$ choices
for~$M$, $w-k-m$ undetermined positions after choosing~$M$,
and each undetermined position has~$b$ possible values, we see that
$$S = (w-k+1) \cdot {w-k\choose m} \cdot \left(2^b-b \right)^{\gbb+m}\cdot b^{w-k-m}.$$
So, to count the number of times that~$B$ occurs in~$\pbw$,
we sum over $m$ from $0$ to $w-k$ and use the
binomial theorem to get
$$N(B,\pbw)\ge
\sum_{m=0}^{w-k}(w-k+1) \cdot {w-k\choose m} \cdot \left(2^b-b \right)^{\gbb+m}\cdot b^{w-k-m}$$
$$=(w-k+1) \cdot \left(2^b-b \right)^{\gbb}\sum_{m=0}^{w-k}{w-k\choose m} \cdot \left(2^b-b \right)^m\cdot b^{w-k-m}$$
$$=(w-k+1) \cdot \left(2^b-b \right)^{\gbb} \cdot (2^b)^{w-k}.$$\qed
\end{proof}

We will need the following definition in the proof of \refl{nl}.

\begin{definition}
Let $B$, $C$, and $D$ be blocks with $|B|\ge 2$. Suppose that $B=(b_1,\ldots,
b_k)$, $C=(c_1,\ldots,c_m)$, and $D=(d_1,\ldots,d_t)$. We say that $B$
straddles $C$ and $D$ if there is an integer $s$ in $[2,k]$, an
integer $e$ in $[1,m]$, and an integer $f$ in [1,t] such that
$(b_1,\ldots, b_{s-1})=(c_e,\ldots,c_m)$ and
$(b_s,\ldots, b_k)=(d_1,\ldots,d_f)$.
\end{definition}

Intuitively, $B$ straddles $C$ and $D$ if $B$ starts in $C$ and ends in $D$.
It is worth noting that with this definition, if $|B|=1$ then there are no
choices of $C$ and $D$ for which $B$ straddles $C$ and $D$.

\begin{lemma}\labl{nl}
Let~$w$, $k$, and~$b$ be positive integers such that $k\le w$.
If~$B$ is a block of length~$k$ in base $b+1$, then
$$N(B,\pbw)
\le w\cdot \left(2^b-b \right)^{\gbb}\cdot 2^{b(w-k)}
+(k-1)(b+1)^w.$$
\end{lemma}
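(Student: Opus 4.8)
The plan is to classify every occurrence of $B$ in $\pbw$ according to how it meets the copies of the blocks $P_i$ out of which $\pbw$ is concatenated. Since $|B|=k\le w$ and each copy has length $w$, an occurrence meets at most two consecutive copies, so it is of exactly one of three types: \emph{interior} (contained in a single copy of some $P_i$), \emph{internal-straddle} (spanning the boundary between two consecutive copies of the \emph{same} $P_i$), or \emph{transition-straddle} (spanning the boundary between the last copy of $P_i$ and the first copy of $P_{i+1}$). I would bound the interior and internal-straddle contributions together by the main term $w\cdot(2^b-b)^{\gbb}\cdot 2^{b(w-k)}$, and the transition-straddle contribution by $(k-1)(b+1)^w$.

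First, the interior count is already computed exactly in the proof of \refl{ng}: summing over all $P_i$ the number of interior occurrences of $B$, weighted by the number $N_i=(2^b-b)^{g_b(P_i)}$ of copies of $P_i$, gives precisely $(w-k+1)\cdot(2^b-b)^{\gbb}\cdot 2^{b(w-k)}$. The mechanism there is that pinning $B$ into $k$ fixed positions of $P_i$ forces those $k$ digits, which contribute total weight $(2^b-b)^{\gbb}$, while each of the remaining $w-k$ free positions contributes total weight $b\cdot 1+(2^b-b)=2^b$; there are $w-k+1$ admissible interior starting positions.

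Next I would treat the internal straddles by this same digit-counting device. For a split point $s\in\{2,\dots,k\}$, an occurrence of $B$ straddling a junction $P_iP_i$ forces the last $s-1$ digits and the first $k-s+1$ digits of $P_i$ to match the corresponding digits of $B$; since $w\ge k$ these are $k$ \emph{distinct} positions carrying exactly the $\gbb$ many $b$'s of $B$, so summing $N_i$ over the $P_i$ that admit a straddle at split $s$ again factors as $(2^b-b)^{\gbb}\cdot 2^{b(w-k)}$, one unit for each of the $w-k+1 \to w$ coefficient jump. A run of $P_i$ has only $N_i-1\le N_i$ internal boundaries, so the total internal-straddle count is at most $\sum_i N_i\,\sigma_i$, where $\sigma_i$ is the number of admissible split points for $P_i$; summing over the $k-1$ split points this is at most $(k-1)\cdot(2^b-b)^{\gbb}\cdot 2^{b(w-k)}$. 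Adding this to the interior count yields exactly $w\cdot(2^b-b)^{\gbb}\cdot 2^{b(w-k)}$.

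Finally, the transition straddles are bounded crudely: there are $(b+1)^w$ distinct blocks $P_i$, hence $(b+1)^w-1$ transition boundaries, and at each such boundary at most $k-1$ occurrences of $B$ can straddle (one per split point), giving at most $(k-1)(b+1)^w$. Combining the three estimates proves the lemma. The main obstacle — and the key idea forcing the leading coefficient to be $w$ rather than $w-k+1$ — is recognizing that the internal straddles carry, per split point, exactly the same weighted count $(2^b-b)^{\gbb}\cdot 2^{b(w-k)}$ as a single interior starting position, so that the $k-1$ split points supply precisely the missing $k-1$ units of the coefficient; the transition straddles, being tied to the far smaller number $(b+1)^w$ of distinct blocks rather than to the $2^{bw}$ total copies, are then negligible enough to absorb into the additive error term.
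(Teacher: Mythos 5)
Your proof is correct and takes essentially the same route as the paper's: the same three-way classification of occurrences (interior to a copy, straddling a junction of two equal copies, straddling a junction of two distinct blocks), the same weighted digit-count giving $\left(2^b-b\right)^{\gbb}\cdot 2^{b(w-k)}$ per split point for the internal straddles, and the same crude bound $(k-1)(b+1)^w$ for the transitions, tied to the number of distinct blocks rather than the number of copies.
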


\begin{proof}
Note that $\pbw$ has the form $1C_11C_2\cdots 1C_t$ for some
length~$w$ blocks $C_1,\ldots, C_t$ and some~$t$.
In proving \refl{ng}, we showed that the number of
occurrences of~$B$ in some~$P_i$ is exactly
\begin{equation}\label{eqn:ledit1}
(w-k+1)\cdot \left(2^b-b \right)^{\gbb}\cdot 2^{b(w-k)}.
\end{equation}
When \refeq{ledit1} is added to an upper bound for the number of occurrences
of~$B$ in~$\pbw$ that straddle~$C_i$ and
$C_{i+1}$ for some $i$, we  obtain an upper bound for $N(B,\pbw)$.

Consider a block~$B$ that straddles the block~$C_i=(c_{i,1},\ldots,c_{i,w})$
and~$C_{i+1}=(c_{i+1,1},\ldots,c_{i+1,w})$ for some~$i$.
In this case,~$B$ starts at position~$s$ in~$C_i$ for some~$s$ such
that $w-k+2\le s\le w$.
Define $B_1=(c_{i,s},\ldots,c_{i,w})$,
$B_2=(c_{i+1,1},\ldots,c_{i+1,k-w+s-1})$,
$B_2'=(c_{i,1},\ldots,c_{i,k-w+s-1})$, and
$B_1'=(c_{i+1,s},\ldots,c_{i+1,w})$.
Note that since $k\le w$, these four sets are pairwise disjoint.

If $C_i=C_{i+1}$, then $B_1=B_1'$ and $B_2=B_2'$. Since the
blocks~$B_1$ and~$B_2'$ are both contained in~$C_i$ and
$$|B_1|+|B_2'|=|B_1|+|B_2|=|B|=k,$$
we see that~$k$ positions of~$C_i$ are determined. Thus there are
$w-k$ undetermined positions in $C_i$.

Let
$$M=\{j ; c_{i,j}=b\hbox{ and } j\not\in[k-w+s,s-1] \}$$
and let $m=|M|$.
Therefore $m$ is the number of times the block~$C_i$ takes
on the value~$b$ outside $B_2'\cup B_1$. We again note that $0\le m\le w-k$.

Since $m$ is the number of $b$'s in~$C_i$
not determined by $B$, we know that
$g_b(B_1)$ is number of $b$'s in~$C_i$ inside $B_1$ and
$g_b(B_2')$ is number of $b$'s in~$C_i$ inside $B_2'$. Thus
$$g_b(C_i)=g_b(B_1)+g_b(B_2')+m=g_b(B_1)+g_b(B_2)+m=\gbb+m$$
is the total of number of $b$'s in~$C_i$. By \refl{amount}, it follows that
exactly $\left(2^b-b \right)^{\gbb+m}$ copies of $C_i$ are concatenated in forming $\pbw$.
For a fixed~$m$, define~$S_m$ to be
the total number of occurrences of~$B$ straddling some
$C_i$ and $C_{i+1}$ such that $C_i=C_{i+1}$
that have exactly~$m$ occurrences of~$b$ not determined by~$B$.
Since there are $k-1$ choices for $s$, ${w-k\choose m}$ choices
for~$M$, $w-k-m$ undetermined positions after choosing~$M$
and each undetermined position has~$b$ possible values,
we see that for a fixed~$m$
$$S_m\le(k-1) \cdot {w-k\choose m} \cdot \left(2^b-b \right)^{\gbb+m}\cdot b^{w-k-m}.$$

To obtain an upper bound for the number of times~$B$ occurs in~$\pbw$
straddling some $C_i$ and $C_{i+1}$ such that $C_i=C_{i+1}$,
we need only sum over $m$ from $0$ to $w-k$ and use the
binomial theorem to get
$$S:=\sum_{m=0}^{w-k} S_m\le
(k-1)\sum_{m=0}^{w-k}{w-k\choose m} \cdot \left(2^b-b \right)^{\gbb+m}\cdot b^{w-k-m}$$
$$=\left(2^b-b \right)^{\gbb}(k-1)\sum_{m=0}^{w-k}{w-k\choose m}\left(2^b-b \right)^m b^{w-k-m}=(k-1)\left(2^b-b \right)^{\gbb}(2^b)^{w-k}.$$

Next, we let~$S'$ be the number of occurrences of~$B$ straddling
the blocks~$C_i$ and~$C_{i+1}$ such that $C_i$ and~$C_{i+1}$ are
not equal. 
Let $Z$ denote the set of all $i$ such that $C_i \neq C_{i+1}$.
Since the~$C_i$'s are written in lexicographic order, it
follows that~$Z$  has
no more elements than the number of base $b+1$ blocks of length~$w$.
So $Z$ has at most $(b+1)^w$ elements. For each $i$ in~$Z$,
there are at most~$k-1$ occurrences of~$B$ straddling $C_i$ and
$C_{i+1}$. Therefore
$$S'\le (k-1) \cdot (b+1)^w.$$

For each occurrence of~$B$ in~$\pbw$, either $B$ occurs inside~$C_i$ for some~$i$,
$B$ straddles some~$C_i$ and~$C_{i+1}$ for which $C_i=C_{i+1}$,
or $B$ straddles some~$C_i$ and~$C_{i+1}$ for which $C_i\not=C_{i+1}$.
We determined an upper bound for
the number of occurrences of $B$ inside some $C_i$ in \refl{ng}.
In the proof of the current lemma,
we showed that $S$ is an upper bound for the number of
occurrences of $B$ straddling some~$C_i$ and~$C_{i+1}$ for which $C_i=C_{i+1}$.
Also in the proof of the current lemma, we have seen that
$S'$ is an upper bound for the number of
occurrences of $B$ straddling some~$C_i$ and~$C_{i+1}$
for which $C_i\not=C_{i+1}$. Putting these three facts together, we see that

$$N(B,\pbw)\le
(w-k+1)\cdot \left(2^b-b \right)^{\gbb}\cdot 2^{b(w-k)}+S+S'$$
$$
\leq
(w-k+1)\cdot \left(2^b-b \right)^{\gbb}\cdot 2^{b(w-k)}
+(k-1) \cdot \left(2^b-b \right)^{\gbb}\cdot 2^{b(w-k)}+(k-1) \cdot (b+1)^w$$
$$
=w\cdot \left(2^b-b \right)^{\gbb}\cdot 2^{b(w-k)}+(k-1) \cdot (b+1)^w.$$\qed
\end{proof}

We now want to show that $\pbw$ is $\ekn$-normal. First
we need a technical lemma:

\begin{lemma}\labl{1021}
If $m$, $b$, $k$ and $w$ are positive integers such that $b\ge 6$ and
$m\le k \le w/2$, then $(m-1)(b+1)^w\le k\cdot 2^{b(w-m)}.$
\end{lemma}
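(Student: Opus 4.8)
The plan is to reduce the claimed inequality to a simple scalar estimate in $b$ alone. First I would use the hypothesis $m \le k$ to eliminate the factors $m-1$ and $k$: since $m-1 \le k$ and $k \ge 1$, it suffices to prove the cleaner inequality
\begin{equation}
(b+1)^w \le 2^{b(w-m)},
\end{equation}
because multiplying its left side by $m-1 \le k$ and its right side by $k$ then recovers the desired bound. (The case $m=1$ is trivial, as the left side of the original inequality vanishes.)

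Next I would exploit the hypothesis $k \le w/2$, which gives $m \le k \le w/2$ and hence $w - m \ge w/2$. Therefore $2^{b(w-m)} \ge 2^{bw/2} = \left(2^{b/2}\right)^w$, so it is enough to establish
\begin{equation}
(b+1)^w \le \left(2^{b/2}\right)^w,
\end{equation}
and since both bases are positive this follows by taking $w$-th roots from the single scalar inequality $b+1 \le 2^{b/2}$.

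The remaining step, and the only one requiring genuine verification, is the scalar inequality $b+1 \le 2^{b/2}$ for integers $b \ge 6$. Squaring (both sides being positive) turns this into $(b+1)^2 \le 2^b$, which I would prove by induction on $b$: the base case $b=6$ reads $49 \le 64$, and for the inductive step one notes that $(b+2)^2/(b+1)^2 = \left(1 + 1/(b+1)\right)^2 \le (8/7)^2 < 2 = 2^{b+1}/2^b$ whenever $b \ge 6$, so the ratio of the left sides is dominated by the ratio of the right sides. I expect this elementary induction to be the only obstacle, and it is a mild one; the substance of the lemma lies in organizing the two reductions so that the dependence on $m$, $k$, and $w$ collapses onto the clean condition $b \ge 6$.
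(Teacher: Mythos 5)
Your proof is correct and takes essentially the same route as the paper's: both arguments reduce the claim to the three observations that $m-1\le k$, that $w-m\ge w/2$, and that $b+1\le 2^{b/2}$ for $b\ge 6$. The only differences are cosmetic --- you argue forward from the reduced inequality rather than starting from $1\ge 2^{b(-w/2+m)}$, and you supply an inductive verification of $b+1\le 2^{b/2}$ where the paper simply asserts it.
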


\begin{proof}
Since $m\le k$ and $k\le w/2$, it follows that
\begin{equation}\label{eqn:blank}
1\ge 2^{b(-w/2+m)}
=
2^{-bw/2}\cdot 2^{mb}
=
(2^{-b/2})^w\cdot 2^{mb}
\end{equation}
\begin{equation}\label{eqn:nlank}
\ge
\left((b+1)2^{-b}\right)^w \cdot 2^{mb}
\ge
\left({m-1\over k}\right) \cdot {(b+1)^w 2^{bm}\over 2^{bw}},
\end{equation}
where \refeq{blank} to \refeq{nlank} is due to $b+1\le 2^{b/2}$ for $b\ge 6$.
Therefore
\begin{equation}\label{eqn:blanknlank}
1\ge\left({m-1\over k}\right) \cdot {(b+1)^w 2^{bm}\over 2^{bw}}.
\end{equation}
Multiplying both sides of \refeq{blanknlank} by
$k\cdot 2^{b(w-m)}$, the lemma follows.\qed
\end{proof}

\begin{lemma}\labl{eknu}
Let $b$, $k$ and $w$ be positive integers such that $b\ge 6$
and $k\le w/2$. If $\e={k\over w}$, then $\pbw$ is
$\ekn$-normal.
\end{lemma}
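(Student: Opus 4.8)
The plan is to verify Definition~\refd{1.12} head-on: I must show that for every block $B$ of length $m\le k$ one has $\nu_b^{(m)}(B)\,|\pbw|\,(1-\e)\le N(B,\pbw)\le \nu_b^{(m)}(B)\,|\pbw|\,(1+\e)$ with $\e=k/w$. First I would discard the blocks $B$ containing a digit larger than $b$: for these $\nu_b^{(m)}(B)=0$ because $\nu_b^{(1)}$ vanishes above $b$, while $\pbw$ is assembled entirely from base $b+1$ digits, so $N(B,\pbw)=0$ and the inequalities hold trivially. This reduces everything to base $b+1$ blocks $B$, which are exactly the ones covered by \refl{ng} and \refl{nl}.

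The key preliminary computation is to pin down the target value $\nu_b^{(m)}(B)\,|\pbw|$. Arguing exactly as in \refl{amount} but for length-$m$ blocks gives $2^{bm}\nu_b^{(m)}(B)=(2^b-b)^{\gbb}$, and $|\pbw|=w\cdot 2^{bw}$ by \refl{pbw}; multiplying yields $\nu_b^{(m)}(B)\,|\pbw|=w\,(2^b-b)^{\gbb}\,2^{b(w-m)}=:T$. The whole lemma then reduces to sandwiching $N(B,\pbw)$ between $T(1-\e)$ and $T(1+\e)$.

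For the lower bound I would plug in \refl{ng} with its $k$ read as $m$ (legitimate since $m\le k\le w/2\le w$): it gives $N(B,\pbw)\ge (w-m+1)(2^b-b)^{\gbb}2^{b(w-m)}=T\cdot\frac{w-m+1}{w}=T\bigl(1-\frac{m-1}{w}\bigr)$, and since $m-1\le k-1\le k$ we have $\frac{m-1}{w}\le \frac{k}{w}=\e$, so this is at least $T(1-\e)$. For the upper bound I would feed in \refl{nl} (again with $k$ replaced by $m$) to get $N(B,\pbw)\le T+(m-1)(b+1)^w$, so the whole problem collapses to showing the straddle error term $(m-1)(b+1)^w$ is at most $\e T=k\,(2^b-b)^{\gbb}2^{b(w-m)}$. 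Because $b\ge 6$ forces $2^b-b\ge 1$ and hence $(2^b-b)^{\gbb}\ge 1$, it suffices to prove $(m-1)(b+1)^w\le k\,2^{b(w-m)}$, which is precisely \refl{1021}: its hypotheses $b\ge 6$ and $m\le k\le w/2$ are satisfied.

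The main obstacle is entirely concentrated in that straddle term: a priori the error $(m-1)(b+1)^w$ could be comparable to $T$ and would then wreck the upper estimate. The whole role of the hypotheses $b\ge 6$ and $k\le w/2$, and the reason \refl{1021} was isolated beforehand, is exactly to guarantee the error is absorbed into $\e T$. Once \refl{1021} is invoked, the remainder is routine bookkeeping, so I expect the proof to be short.
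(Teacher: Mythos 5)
Your proposal is correct and follows essentially the same route as the paper: reduce to base $b+1$ blocks, use \refl{pbw} and the \refl{amount}-style computation to identify $\nu_b^{(m)}(B)|\pbw|=w(2^b-b)^{g_b(B)}2^{b(w-m)}$, get the lower bound from \refl{ng} via $m-1<k$, and absorb the straddle term $(m-1)(b+1)^w$ into $\e\cdot\nu_b^{(m)}(B)|\pbw|$ using \refl{1021} together with $(2^b-b)^{g_b(B)}\ge 1$. Your explicit dismissal of blocks containing a digit exceeding $b$ is a small extra care the paper leaves implicit, but otherwise the two arguments coincide.
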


\begin{proof}
By definition, $\pbw$ is $\ekn$-normal if for
all blocks~$B$ in base $b+1$ of length~$m\le k$
\begin{equation}
\nu(B)|\pbw|(1-\epsilon) \le N(B,\pbw) \le \nu(B)|\pbw|(1+\epsilon).
\end{equation}
Therefore by \refl{ng} and \refl{nl}, it is enough to show that
\begin{equation}\label{eqn:six}
\nu(B)|\pbw|(1-\epsilon)\le (w-m+1)\cdot \left(2^b-b \right)^{\gbb}\cdot 2^{b(w-m)}
\end{equation}
and
\begin{equation}\label{eqn:seven}
w\cdot \left(2^b-b \right)^{\gbb}\cdot 2^{b(w-m)}
+(m-1)(b+1)^w\le\nu(B)|\pbw|(1+\epsilon).
\end{equation}
To show \refeq{six}, we write
$$
(1-\e)|\pbw|\nu_b(B)=
\left(1-{k\over w} \right)w\cdot 2^{bw}\left(2^b-b \right)^{\gbb}2^{-bm}
$$
$$
=(w-k) \cdot \left(2^b-b \right)^{\gbb} \cdot 2^{b(w-m)}<(w-m+1) \cdot \left(2^b-b \right)^{\gbb} \cdot 2^{b(w-m)}.
$$
Next, to show \refeq{seven}, we write
$$
w \cdot \left(2^b-b \right)^{\gbb} \cdot 2^{b(w-m)}+(m-1)(b+1)^w \leq 
w \cdot \left(2^b-b \right)^{\gbb} \cdot 2^{b(w-m)}+k \cdot 2^{b(w-m)}
$$
$$
\le w \cdot \left(2^b-b \right)^{\gbb} \cdot 2^{b(w-m)}+k \cdot \left(2^b-b \right)^{\gbb} \cdot 2^{b(w-m)}
=(1+\e)w \cdot \left(2^b-b \right)^{\gbb} \cdot 2^{b(w-m)},
$$
where the first inequality follows from \refl{1021}. \qed
\end{proof}

\begin{theorem}\labt{qnex}
For $i\le 5$, let $x_i=(0,1)$, $b_i=2$ and $l_i=0$.
If for $i\ge 6$ we let $x_i=P_{i,i^2}$, $b_i=2^i$ and
$l_i=2^{4i^2}$, then~$x$ is $Q$-normal.
\end{theorem}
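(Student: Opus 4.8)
The plan is to exhibit the given data as a $W$-good sequence for an appropriate BFF $W$ and then apply \reft{oldmain}. The $6$-tuple $W$ requires auxiliary parameters $p_i$, $k_i$, $\epsilon_i$ and weightings $\mu_i$ beyond the $l_i$, $b_i$, $x_i$ already fixed; for $i \ge 6$ I would take $p_i = i$, $k_i = i$, $\epsilon_i = 1/i$ and $\mu_i = \nu_i$, and for $i \le 5$ I would choose any harmless values (say $p_i = 2$, $k_i = 1$, $\mu_i = \lambda_2$, and $\epsilon_i$ strictly decreasing in $(1/6,1)$). The choices for $i \le 5$ are immaterial to $x$ because $l_i = 0$ removes the blocks $x_1,\ldots,x_5$ from the Cantor series.

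First I would verify that $W = \{(l_i,b_i,p_i,\epsilon_i,k_i,\mu_i)\}$ satisfies \refd{1.13}: the sequences $l_i$, $b_i = 2^i$, $p_i = i$, $k_i = i$ are non-decreasing, $b_i \ge 2$, $b_i \to \infty$, $p_i \to \infty$, $\epsilon_i = 1/i$ decreases strictly to $0$, and, as observed right after its definition, each $\nu_i$ is an $(i,2^i) = (p_i,b_i)$-uniform weighting. Since $k_i = i \to \infty$, we have $R(W) = \{0,1,2,\ldots\}$. I would then check that every $x_i$ is $(\epsilon_i,k_i,\mu_i)$-normal: for $i \ge 6$ this is precisely \refl{eknu}, whose hypotheses $b = i \ge 6$ and $k_i = i \le i^2/2 = w/2$ hold and which gives normality with $\epsilon = k_i/w = 1/i = \epsilon_i$; for $i \le 5$ the block $(0,1)$ is trivially $(\epsilon_i,1,\lambda_2)$-normal.

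The core of the argument is to show $\{x_i\}$ is $W$-good, i.e.\ to verify \refeq{good1}, \refeq{good2}, \refeq{good3} for each fixed $k \in R(W)$. Here I would substitute $|x_i| = |P_{i,i^2}| = i^2 2^{i^3}$ (from \refl{pbw}), $\epsilon_{i-1}-\epsilon_i = \frac{1}{i(i-1)}$, $l_i = 2^{4i^2}$ and $b_i^{-k} = 2^{-ik}$, reducing each condition to showing that a concrete ratio tends to $0$. In every case the ratio carries a dominant exponential factor $2^{-P_k(i)}$ with $P_k(i) \to +\infty$ --- namely $P_k(i) = i^3 - ik$ for \refeq{good1}, with $P_k(i)$ growing like $3i^2$ for \refeq{good2} and like $i^2$ for \refeq{good3} --- so that the only growing competitor, the linear-in-$i$ exponent $ik$ arising from $b_i^k = 2^{ik}$, is absorbed for every fixed $k$ and the ratio vanishes.

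The main obstacle is exactly this bookkeeping of competing exponents: one must confirm that the cubic growth of $\log_2|x_i| \sim i^3$ and the quadratic growth of $\log_2 l_i = 4i^2$ dominate both the linear term $ik$ coming from $b_i^k$ and the logarithmic correction from $\epsilon_{i-1}-\epsilon_i$. Since they do so for arbitrary fixed $k$, all three conditions hold, $\{x_i\}$ is $W$-good, and because $k_i \to \infty$ the second clause of \reft{oldmain} gives that $x$ is $Q$-normal.
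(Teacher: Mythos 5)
Your proposal is correct and follows essentially the same route as the paper: the same choices of $p_i=i$, $k_i=i$, $\epsilon_i=1/i$, $\mu_i=\nu_i$, an appeal to \refl{eknu} for the $(\epsilon_i,k_i,\nu_i)$-normality of $P_{i,i^2}$, and the same comparison of exponents ($i^3-ik$, roughly $3i^2$, and roughly $i^2$) to verify \refeq{good1}--\refeq{good3} before invoking the second clause of \reft{oldmain}. If anything you are slightly more explicit than the paper, which does not actually cite \refl{eknu} inside its proof of the theorem even though that lemma exists precisely to supply the normality of the blocks required by the definition of a $W$-good sequence.
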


\begin{proof}
For each $i\ge 1$,
we shall define numbers $p_i$, $k_i$, $\e_i$, and 
weightings $\mu_i$ in order to define a $BFF$ $W$ such
that~$\{x_i\}_{i=1}^\infty$ is $W$-good. Thus, we have only to
verify \refeq{good1}, \refeq{good2} and \refeq{good3} of \reft{oldmain} to conclude that~$x$ is
$Q$-normal.

For $i\le 5$, we define
$p_i=2$, $k_i=1$ and $\mu_i=\lambda_2$.
For $i\ge 6$, set $p_i=i$, $k_i=i$
and $\mu_i=\nu_i$.
Define $\e_1=.9$, $\e_1=.8$, $\e_1=.7$, $\e_1=.6$, $\e_1=.5$
and $\e_i=1/i$ for $i\ge 6$.
Let $W=\{(l_i,b_i,p_i,\epsilon_i,k_i,\mu_i)\}_{i=1}^{\infty}.$
We note that since $\mu_i$ is $(p_i,b_i)$-uniform, it follows
by definition that~$W$ is a $BFF$.

Since $\lim_{i\to\infty}k_i=\lim_{i\to\infty}i=\infty$,
we see that $R(W)$ is the set of all non-negative integers.
So, it is enough to show that conditions \refeq{good1}, \refeq{good2} and \refeq{good3} hold for all
non-negative integers~$k$. First note that
$|x_i|=i^2 \cdot 2^{i^3}$ for $i\ge 6$.

To show \refeq{good1}, note that
$$
\lim_{i \rightarrow \infty} |x_i| \Bigg/ \left( \frac {2^{ik}} {\frac {1} {i-1}-\frac {1} {i}} \right)=\lim_{i \rightarrow \infty} \frac {i^2 \cdot 2^{i^3}} {2^{ik} \cdot i(i-1)}=\infty.
$$

To show \refeq{good2}, notice that
$$\lim_{i \rightarrow \infty} \frac {l_{i-1}} {l_i} \cdot \frac {|x_{i-1}|} {|x_i|} \cdot i \cdot 2^{ik}
\le \lim_{i \rightarrow \infty} 1 \cdot \left( \frac {i-1} {i} \right)^2 \cdot \frac {2^{(i-1)^3+k i}} {2^{i^3}} \cdot i
\le \lim_{i \rightarrow \infty} 1 \cdot 2^{-3i^2+(3+k)i-1} \cdot i=0.
$$

And finally, to show \refeq{good3}, we write
$$
\lim_{i \rightarrow \infty} \frac {1} {l_i} \cdot \frac {|x_{i+1}|} {|x_i|} \cdot 2^{ik}
=\lim_{i \rightarrow \infty} \left( \frac {i+1} {i} \right)^2 \cdot \frac {2^{(i+1)^3+ki}} {2^{4i^2} \cdot 2^{i^3}}
\leq \lim_{i \rightarrow \infty} 2 \cdot 2^{-i^2+(3+k)i+1}=0.
$$

This shows that $\{x_i\}_{i=1}^\infty$ is $W$-good. Therefore $x$ is $Q$-normal by \reft{oldmain}.\qed
\end{proof}

\begin{theorem}\labt{t0}
If $\{x_i\}$, $\{b_i\}$, and $\{l_i\}$ are defined as in \reft{qnex}, then~$\lim_{n\to\infty} T_{Q,n}(x)=0$.
\end{theorem}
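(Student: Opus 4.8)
The plan is to compute $T_{Q,n}(x)$ exactly as a tail of the $Q$-Cantor series of $x$ and then to exploit the large gap, built into the construction of \reft{qnex}, between the size of the digits $E_m$ and the bases $q_m$. Starting from $x=\sum_{m=1}^\infty E_m/(q_1\cdots q_m)$, I would multiply by $q_1\cdots q_n$ and split the sum at $m=n$. For $m\le n$ each term equals the integer $E_m\, q_{m+1}\cdots q_n$ and so vanishes modulo $1$, while for $m>n$ the term is $E_m/(q_{n+1}\cdots q_m)$. Since this remaining tail is itself a legitimate $Q$-Cantor tail and hence already lies in $[0,1)$, reducing modulo $1$ changes nothing and I obtain
$$T_{Q,n}(x)=\sum_{m=n+1}^\infty \frac{E_m}{q_{n+1}\cdots q_m}.$$

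Next I would estimate this tail by peeling off its first term,
$$T_{Q,n}(x)=\frac{E_{n+1}}{q_{n+1}}+\frac{1}{q_{n+1}}\sum_{m=n+2}^\infty \frac{E_m}{q_{n+2}\cdots q_m}\le\frac{E_{n+1}+1}{q_{n+1}},$$
where I have used that every $Q$-Cantor tail is at most $1$ (its maximal value $\sum_m (q_m-1)/(q_{n+2}\cdots q_m)$ telescopes to $1$). The decisive structural input is the digit-versus-base gap: if the position $n+1$ lies in the $i$-th block, so that $L_{i-1}<n+1\le L_i$ with $i\ge 6$, then $E_{n+1}$ is a digit of $x_i=P_{i,i^2}$, hence a base $i+1$ digit, so $E_{n+1}\le i$, whereas $q_{n+1}=b_i=2^i$. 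Combining these, $T_{Q,n}(x)\le (i+1)/2^i$.

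Finally I would let $n\to\infty$. Because $l_j=2^{4j^2}>0$ and $|x_j|=j^2 2^{j^3}>0$ for every $j\ge 6$, each block occupies a positive, finite number of positions and there are infinitely many blocks; hence the index $i=i(n+1)$ of the block containing position $n+1$ is nondecreasing in $n$ and tends to $\infty$. Since $T_{Q,n}(x)\ge 0$ and $(i+1)/2^i\to 0$ as $i\to\infty$, a squeeze yields $\lim_{n\to\infty}T_{Q,n}(x)=0$. The computation is short, and the only delicate point---the main obstacle---is the bookkeeping that correctly locates position $n+1$ inside the block structure and confirms both that $i\to\infty$ and that the digit bound $E_{n+1}\le i$ holds; the estimates themselves are routine telescoping.
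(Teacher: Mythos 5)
Your proposal is correct and follows essentially the same route as the paper's proof: both express $T_{Q,n}(x)$ as the tail $\sum_{m>n} E_m/(q_{n+1}\cdots q_m)$, bound everything past the first term by $1/q_{n+1}$ via telescoping, and use the construction's digit-versus-base gap ($E_{n+1}\le i$ while $q_{n+1}=2^i$, where $i$ indexes the block containing position $n+1$) to get a bound of the form $(i+1)/2^i\to 0$. No substantive differences.
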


\begin{proof}
To prove \reft{t0} we use the trick which is usually used to prove the irrationality of $x$.  For more information see e.g. \cite{HT}. Note that
$$T_{Q,n}(x)=q_1\cdots q_n x \pmod 1
={E_{n+1}\over q_{n+1}}+{E_{n+2}\over q_{n+1}q_{n+2}}+\cdots$$
Given~$n$, define $j=j(n)$ as the unique integer satisfying
$$L_{j-1} < n+1 \le L_j.$$
Note that $q_{n+1}=b_j=2^j$ and $E_{n+1}\le j$ by construction.
Additionally, note that
$${E_{n+2}\over q_{n+1}q_{n+2}}+{E_{n+3}\over q_{n+1}q_{n+2}q_{n+3}}+\cdots
\le {1\over q_{n+1}}
\left[
{E_{n+2}\over q_{n+2}}+{E_{n+3}\over q_{n+2}q_{n+3}}+\cdots
\right]
\le {1\over q_{n+1}}\cdot 1
={1\over q_{n+1}}.$$
Therefore since $0 \le E_{n+1} \le j$, we see that
$$T_{Q,n}(x)
={E_{n+1}\over q_{n+1}}+
\left[{E_{n+2}\over q_{n+1}q_{n+2}}+{E_{n+3}\over q_{n+1}q_{n+2}q_{n+3}}+\cdots
\right]
\le
{j \over 2^j}+{1\over 2^j}\to 0.$$\qed
\end{proof}

\begin{corollary}\labc{notdn}
If $\{x_i\}$, $\{b_i\}$, and $\{l_i\}$ are defined as in \reft{qnex}, then
$x$ is not $Q$-distribution normal.
\end{corollary}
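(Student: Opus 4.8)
The plan is to deduce this immediately from \reft{t0} together with the elementary fact that a sequence converging to a single limit cannot be uniformly distributed mod $1$. By \reft{t0}, under the stated hypotheses we have $\lim_{n\to\infty}T_{Q,n}(x)=0$. Recall that $x$ is $Q$-distribution normal precisely when $\{T_{Q,n}(x)\}_{n=0}^\infty$ is u.d. mod $1$, so it suffices to show that this particular sequence fails to be uniformly distributed.

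To see this, I would argue directly from the counting definition of uniform distribution. Fix the subinterval $[1/2,1)\subseteq[0,1)$. If the sequence were u.d. mod $1$, then the proportion of indices $n\le N$ with $T_{Q,n}(x)\in[1/2,1)$ would have to tend to the length $1/2$ as $N\to\infty$. However, since $T_{Q,n}(x)\to 0$, there is some $M$ for which $T_{Q,n}(x)<1/2$ whenever $n\ge M$; hence at most $M$ of the first $N$ terms can lie in $[1/2,1)$, so the corresponding proportion is bounded above by $M/N$ and tends to $0$, not to $1/2$. This contradiction shows that $\{T_{Q,n}(x)\}_{n=0}^\infty$ is not u.d. mod $1$, and therefore $x$ is not $Q$-distribution normal.

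There is essentially no obstacle here: the entire substance of the corollary was already carried by \reft{t0}, and the remaining step is just the standard observation that convergence to a point is incompatible with uniform distribution. The only point requiring mild care is to phrase this incompatibility in terms of the proportion/counting formulation of u.d. mod $1$, so that it is self-contained and matches the definition of $Q$-distribution normality used above; one could alternatively invoke Weyl's criterion, but the direct counting argument is shorter and needs no additional machinery.
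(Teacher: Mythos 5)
Your proof is correct and takes the same route the paper intends: the corollary is an immediate consequence of \reft{t0}, since a sequence converging to $0$ cannot be u.d. mod $1$ (the paper leaves this step implicit, while you spell it out via the counting definition on $[1/2,1)$). Nothing is missing.
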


\section{A Construction Giving Simultaneous $Q$-Normality and $Q$-Distribution Normality}
\label{sec:4}

\begin{definition}
We say that $V=\{(l_i,b_i,\e_i)\}_{i=1}^\infty$ is a {\it modular friendly family ($MFF$)} if 
$\{l_i\}_{i=1}^\infty$ and $\{b_i\}_{i=1}^\infty$ are
non-decreasing sequences of non-negative integers with $b_i\ge 2$
such that $\{\e_i\}_{i=1}^\infty$ is a decreasing sequence of real
numbers in $(0,1)$ with $\lim_{i\to\infty}\e_i=0$.
\end{definition}

\begin{definition}
Let $V=\{(l_i,b_i,\e_i)\}_{i=1}^\infty$ be an $MFF$.
A sequence $\{x_i\}_{i=1}^\infty$ of $(\e_i,1,\lambda_{b_i})$-normal blocks of non-decreasing length
with $\lim_{i\to\infty}|x_i|=\infty$ is
said to be $V$-{\it nice} if the following two conditions hold:
\begin{equation}\label{eqn:nice1}
{l_{i-1}\over l_i}\cdot {|x_{i-1}|\over |x_i|}=o(1/i);
\end{equation}
\begin{equation}\label{eqn:nice2}
{1\over l_i}\cdot {|x_{i+1}|\over |x_i|}=o(1).
\end{equation}
\end{definition}

Throughout this section, we fix an $MFF$ $V=\{(l_i,b_i,\e_i)\}$
and a $V$-nice sequence of blocks $\{x_i\}$.
Moreover,
if $x_i=(x_{i,1},x_{i,2},\ldots,x_{i,|x_i|})$, then $y_i$ will be understood to stand for the sequence
$$
\left\{ \frac {x_{i,j}} {b_i} \right\}_{j=1}^{|x_i|}.
$$

Given finite sequences $y_1,\ldots,y_t$ and non-negative integers $l_1,\ldots,l_t$, the notation $l_iy_i$ denotes the concatenation of $l_i$ copies of $y_i$ and the notation $l_1y_1\ldots l_ty_t$ denotes the concatenation of the sequences $l_1y_1,\ldots,l_ty_t$.

Throughout the rest of the paper,
for a given $n$, the letter $i=i(n)$ is the unique integer satisfying

\begin{equation}\label{eqn:idef}
L_i < n \leq L_{i+1}.
\end{equation}

Given a sequence $z=(z_1,\ldots,z_n)$ in $[0,1)$ and 
$0<\gamma\le 1$, we define $A([0,\gamma),z)$ as
$$|\{i ; 1\le i\le n\hbox{ and } z_i\in [0,\gamma)\}|.$$

We recall the following standard definition:

\begin{definition}
For a finite sequence $z=(z_1,\ldots,z_n)$, we define the
{\it star discrepancy $D_n^*=D_n^*(z_1,\ldots,z_n)$} as
$$\sup_{0<\gamma\le 1}\left|{A([0,\gamma),z)\over n}-\gamma\right|.$$
Given an infinite sequence $w=(w_1,w_2,\ldots)$, we define
$$
D_n^*(w)=D_n^*(w_1,w_2,\ldots,w_n).
$$
\end{definition}

For convenience, set $D^*(z_1,\ldots,z_n)=D_n^*(z_1,\ldots,z_n)$.
Obviously, this definition does not depend on the order that
the $z_i$'s are chosen in forming $z$.  We will use this fact to reorder a sequence into an increasing sequence so that we may compute its star discrepancy with the following lemma from \cite{KuN}:

\begin{lemma}\labl{kn1}
If $0\le z_1\le\cdots\le z_n < 1$, then an upper bound for the star discrepancy
$D_n^*(z_1,\ldots,z_n)$ is given by
$${1\over 2n}+\max_{1\le i\le n}\left|z_i-{2i-1\over 2n}\right|.$$
\end{lemma}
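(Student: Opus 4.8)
The plan is to work directly from the definition of the star discrepancy and exploit the hypothesis that the points are already sorted in increasing order, $0\le z_1\le\cdots\le z_n<1$. The key observation is that the counting function $A([0,\gamma),z)$ is a nondecreasing step function of $\gamma$ that jumps by one as $\gamma$ crosses each $z_i$ from below. Concretely, for $\gamma$ in the half-open interval $(z_i,z_{i+1}]$ (with the conventions $z_0=0$ and $z_{n+1}=1$), exactly $i$ of the points lie in $[0,\gamma)$, so $A([0,\gamma),z)=i$ on that interval. This means the quantity $\tfrac{A([0,\gamma),z)}{n}-\gamma$, whose supremum of absolute value defines $D_n^*$, is a piecewise-linear sawtooth: on each interval it decreases linearly in $\gamma$ with the constant value $\tfrac{i}{n}$ in the numerator.

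First I would reduce the supremum over the continuum of $\gamma$ to a supremum over the breakpoints. Because $\tfrac{A([0,\gamma),z)}{n}-\gamma$ is continuous from the appropriate side and strictly decreasing within each subinterval where $A$ is constant, the extreme values of $\bigl|\tfrac{A}{n}-\gamma\bigr|$ on the closure of each subinterval $[z_i,z_{i+1}]$ are attained at the endpoints. Thus the supremum over all $\gamma\in(0,1]$ equals the maximum over the finite set of endpoints, and on $[z_i,z_{i+1}]$ the two candidate values are $\bigl|\tfrac{i}{n}-z_i\bigr|$ (approaching the left end, where $A=i$) and $\bigl|\tfrac{i}{n}-z_{i+1}\bigr|$ (at the right end). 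Collecting these over all intervals, I would conclude
$$D_n^*(z_1,\ldots,z_n)=\max_{1\le i\le n}\max\left\{\Bigl|z_i-\tfrac{i-1}{n}\Bigr|,\ \Bigl|z_i-\tfrac{i}{n}\Bigr|\right\},$$
so that each $z_i$ is compared against both the fraction just below and just above it.

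The second step is to bound each such maximum by a single centered expression. For fixed $i$, the two quantities $\bigl|z_i-\tfrac{i-1}{n}\bigr|$ and $\bigl|z_i-\tfrac{i}{n}\bigr|$ are distances from $z_i$ to two points that are $\tfrac1n$ apart, with midpoint $\tfrac{2i-1}{2n}$. By the triangle inequality, the larger of the two distances is at most the distance from $z_i$ to the midpoint plus half the gap, i.e.
$$\max\left\{\Bigl|z_i-\tfrac{i-1}{n}\Bigr|,\ \Bigl|z_i-\tfrac{i}{n}\Bigr|\right\}\le\Bigl|z_i-\tfrac{2i-1}{2n}\Bigr|+\tfrac{1}{2n}.$$
Taking the maximum over $i$ and pulling out the common $\tfrac{1}{2n}$ term then yields the stated bound
$$D_n^*(z_1,\ldots,z_n)\le\tfrac{1}{2n}+\max_{1\le i\le n}\Bigl|z_i-\tfrac{2i-1}{2n}\Bigr|.$$

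I expect the main obstacle to be the careful bookkeeping in the first step rather than any deep idea: one must handle the boundary conventions at $\gamma\to 0^+$ and $\gamma=1$ correctly (the supremum is over $0<\gamma\le 1$, and the half-open interval $[0,\gamma)$ in the definition of $A$ determines which endpoints contribute), and one must verify that no interior maximum is missed, which follows from the monotonicity of the sawtooth on each subinterval. Since the statement is quoted from \cite{KuN} as a known lemma, I would keep this argument brief, emphasizing the sorted-points structure and the reduction to endpoint values, and then invoke the triangle-inequality estimate to finish.
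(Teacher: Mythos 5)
Your argument is correct: the reduction of the supremum over $\gamma$ to the breakpoints gives $D_n^*(z_1,\ldots,z_n)\le\max_{1\le i\le n}\max\bigl\{\bigl|z_i-\frac{i-1}{n}\bigr|,\ \bigl|z_i-\frac{i}{n}\bigr|\bigr\}$, and the midpoint/triangle-inequality step then converts this two-sided comparison into the stated bound with the extra $\frac{1}{2n}$. Note, however, that the paper gives no proof of this lemma at all---it is quoted from Kuipers--Niederreiter \cite{KuN}---and your argument is essentially the standard proof found in that reference, so there is no in-paper argument to diverge from.
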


We note that by \refl{kn1}, $\frac {1} {2n} \leq D_n^*(z) \leq 1$ for all sequences~$z=(z_1,z_2,\ldots,z_n)$
with $z_j$ in $[0,1)$ for all $j$.
It is well known that an infinite sequence
$z=(z_1,\ldots,z_n,\ldots)$ is u.d. mod $1$ iff
$\lim_{n\to\infty}D_n^*(z_1,\ldots,z_n)=0$. This fact and \refl{kn1}
will allow us to prove $Q$-distribution normality of
a well chosen $Q$ and $x$ by computing upper bounds on star discrepancies.

We recall the following lemma from \cite{KuN}:

\begin{lemma}
If~$t$ is a positive integer and for $1\le j\le t$, 
$z_j$ is a finite sequence in $[0,1)$ with
star discrepancy at most~$\epsilon_j$, then
$$D^*(z_1 z_2 \cdots z_t)\le {\sum_{j=1}^t |z_j| \epsilon_j \over\sum_{j=1}^t |z_j|}.$$
\end{lemma}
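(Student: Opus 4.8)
The plan is to exploit the additivity of the counting function $A([0,\gamma),\cdot)$ under concatenation, together with the triangle inequality, reducing the whole statement to a normalization. Writing $n_j=|z_j|$ and $N=\sum_{j=1}^t n_j=|z_1 z_2\cdots z_t|$, I would first observe that for every $\gamma$ in $(0,1]$ the count of entries of the concatenation landing in $[0,\gamma)$ splits as a sum over the pieces,
$$A([0,\gamma),z_1 z_2\cdots z_t)=\sum_{j=1}^t A([0,\gamma),z_j),$$
since the entries of $z_1 z_2\cdots z_t$ are exactly the entries of the individual $z_j$, and membership in $[0,\gamma)$ is tested entry by entry.

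Next, for each fixed $j$ and each fixed $\gamma$, the hypothesis $D^*(z_j)\le\epsilon_j$ gives, directly from the definition of star discrepancy,
$$\left|\frac{A([0,\gamma),z_j)}{n_j}-\gamma\right|\le\epsilon_j,$$
which I would clear to the form $|A([0,\gamma),z_j)-n_j\gamma|\le n_j\epsilon_j$. Summing over $j$ and using $\sum_{j=1}^t n_j\gamma=N\gamma$ together with the triangle inequality yields, for each fixed $\gamma$,
$$\left|A([0,\gamma),z_1 z_2\cdots z_t)-N\gamma\right|\le\sum_{j=1}^t n_j\epsilon_j.$$

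Dividing through by $N$ gives the pointwise-in-$\gamma$ estimate
$$\left|\frac{A([0,\gamma),z_1 z_2\cdots z_t)}{N}-\gamma\right|\le\frac{\sum_{j=1}^t|z_j|\epsilon_j}{\sum_{j=1}^t|z_j|},$$
and since the right-hand side does not depend on $\gamma$, taking the supremum over $0<\gamma\le 1$ on the left produces exactly the claimed bound on $D^*(z_1 z_2\cdots z_t)$.

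The argument is genuinely routine, and there is no serious obstacle. The only point requiring a little care is the bookkeeping around the supremum: one must establish the inequality for each \emph{fixed} $\gamma$ with a bound that is independent of $\gamma$, and only afterwards pass to the supremum defining $D^*$. Because the right-hand side is constant in $\gamma$, this interchange causes no difficulty, but it is worth stating explicitly so that the passage from the per-piece estimates to the discrepancy of the concatenation is transparent.
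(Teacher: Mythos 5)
Your proof is correct. The paper does not prove this lemma at all --- it simply recalls it from \cite{KuN} --- and your argument (additivity of the counting function $A([0,\gamma),\cdot)$ under concatenation, the triangle inequality applied to the per-piece bounds $|A([0,\gamma),z_j)-n_j\gamma|\le n_j\epsilon_j$ for each fixed $\gamma$, and only then passing to the supremum) is exactly the standard proof found there, with the one delicate point, the order of quantifiers around the supremum, handled correctly.
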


\begin{corollary}\labc{kn2}
If~$t$ is a positive integer and for $1\le j\le t$,
$z_j$ is a finite sequence in $[0,1)$ with
star discrepancy at most~$\epsilon_j$, then
$$D^*(l_1 z_1 \cdots l_t z_t)\le {\sum_{j=1}^t l_j |z_j| \epsilon_j \over\sum_{j=1}^t l_j |z_j|}.$$
\end{corollary}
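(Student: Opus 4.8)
The plan is to obtain this corollary as an immediate consequence of the preceding lemma from \cite{KuN}, simply by regarding the concatenation $l_1 z_1 \cdots l_t z_t$ as a concatenation of $\sum_{j=1}^t l_j$ finite sequences rather than of only $t$ of them. The crucial observation is that the star discrepancy of a finite sequence is an intrinsic quantity that does not depend on the position the sequence occupies inside a longer concatenation; consequently each of the $l_j$ identical copies of $z_j$ that appears in $l_1 z_1 \cdots l_t z_t$ individually has length $|z_j|$ and star discrepancy at most $\epsilon_j$. This is exactly what is needed to feed the copies into the hypothesis of the lemma.

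First I would relabel. Set $L=\sum_{j=1}^t l_j$ and write the full concatenation $l_1 z_1 l_2 z_2 \cdots l_t z_t$ as $w_1 w_2 \cdots w_L$, where each block $w_r$ is a copy of some $z_j$; for the $l_j$ indices $r$ belonging to the $j$-th group we have $w_r=z_j$, so $|w_r|=|z_j|$ and $D^*(w_r)\le\epsilon_j$. Next I would apply the preceding lemma to these $L$ sequences $w_1,\ldots,w_L$. That lemma bounds $D^*(w_1\cdots w_L)$ by the ratio whose numerator is the sum over all $L$ blocks of (block length) times (block discrepancy bound) and whose denominator is the sum over all $L$ blocks of (block length). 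Grouping the blocks according to which $z_j$ each one copies, the $l_j$ copies of $z_j$ together contribute $l_j|z_j|\epsilon_j$ to the numerator and $l_j|z_j|$ to the denominator, so summing over $j$ yields
$$D^*(l_1 z_1 \cdots l_t z_t)\le \frac{\sum_{j=1}^t l_j |z_j| \epsilon_j}{\sum_{j=1}^t l_j |z_j|},$$
which is precisely the asserted inequality.

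There is essentially no genuine obstacle here: the result is a bookkeeping consequence of the lemma once the right grouping is chosen. The one point that deserves an explicit word is the justification that replicating $z_j$ does not alter the per-copy discrepancy bound, so that the lemma's hypothesis is legitimately satisfied by all $L$ constituent blocks and not merely by the original $t$ sequences; after that remark, collecting like terms in the numerator and denominator finishes the proof.
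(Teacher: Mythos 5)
Your proposal is correct and is precisely the argument the paper intends: the paper offers no written proof, treating the corollary as immediate from the preceding lemma by viewing $l_1z_1\cdots l_tz_t$ as a concatenation of $\sum_{j=1}^t l_j$ blocks and collecting like terms, exactly as you do.
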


We note the following simple lemma:

\begin{lemma}\labl{mono}
Let $U$ and $U'$ be subsets of $\mathbb{R}$
such that $U$ has a maximum $M$ and a minimum $m$.
If $f:U\to U'$ is a monotone function, then
$|f|$ has a maximum on $U$, which is either
$f(m)$ or $f(M)$.
\end{lemma}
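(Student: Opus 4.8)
The plan is to reduce to a single monotonicity direction and then invoke an elementary fact about absolute values on an interval. First I would observe that it suffices to treat the case where $f$ is non-decreasing. Indeed, if $f$ is non-increasing, then $g=-f$ is non-decreasing with $|g|=|f|$, so the conclusion for $f$ follows verbatim from the conclusion for $g$ (noting that $g(m)=-f(m)$ and $g(M)=-f(M)$ have the same absolute values as $f(m)$ and $f(M)$).

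Assuming $f$ is non-decreasing, the second step is to locate every value of $f$ inside the interval determined by the two extreme values. Since $M$ and $m$ are the maximum and minimum of $U$, any $x\in U$ satisfies $m\le x\le M$, and monotonicity gives $f(m)\le f(x)\le f(M)$. Thus each $f(x)$ lies in the closed interval $[f(m),f(M)]$.

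The key elementary step, which I would state as a standalone observation, is that for any real $t$ in a closed interval $[a,b]$ one has $|t|\le\max\{|a|,|b|\}$. This holds because $-\max\{|a|,|b|\}\le a\le t\le b\le\max\{|a|,|b|\}$. Applying this with $t=f(x)$, $a=f(m)$, and $b=f(M)$ yields $|f(x)|\le\max\{|f(m)|,|f(M)|\}$ for every $x\in U$.

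Finally, since $m$ and $M$ themselves lie in $U$, the bound $\max\{|f(m)|,|f(M)|\}$ is actually attained, at $m$ or at $M$. Hence $|f|$ attains a maximum on $U$, equal to $|f(m)|$ or $|f(M)|$, which is the assertion. There is no genuine obstacle in this lemma; the only points requiring a moment's care are the reduction to the non-decreasing case and the interval inequality $|t|\le\max\{|a|,|b|\}$, both of which are routine.
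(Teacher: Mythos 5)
Your proof is correct, and it takes a genuinely different (and somewhat slicker) route than the paper. After the same reduction to the non-decreasing case, the paper proceeds by a three-way case analysis on the signs of $f(m)$ and $f(M)$: if $f(m)\ge 0$ then $|f|=f$ is increasing and maximized at $M$; if $f(M)\le 0$ then $|f|=-f$ is decreasing and maximized at $m$; and in the mixed-sign case it partitions $U$ into the subsets where $f\le 0$ and where $f\ge 0$, observing that $|f|$ is monotone on each piece and hence maximized at $m$ or at $M$ respectively. You avoid all of this by sandwiching every value $f(x)$ in the interval $[f(m),f(M)]$ and invoking the elementary inequality $|t|\le\max\{|a|,|b|\}$ for $t\in[a,b]$, then noting that the bound is attained because $m$ and $M$ lie in $U$. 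Your argument is shorter and makes the attainment of the maximum completely explicit in one stroke; the paper's case analysis has the mild advantage of telling you at which endpoint the maximum occurs in each sign configuration, though that extra information is never used. One small point in your favor: the lemma as stated says the maximum of $|f|$ is ``$f(m)$ or $f(M)$,'' which should read $|f(m)|$ or $|f(M)|$, and your formulation states this correctly.
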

\begin{proof}
Without loss of generality we
may assume that $f$ is increasing. Therefore $f$ has a minimum at $m$ and a
maximum at $M$. If $f(m) \ge 0$, then $f(x) \ge 0$ for all $x$ in $U$. This
means that $|f|=f$ is increasing on $U$. Therefore $|f|$ attains a maximum at
$M$. Similarly, if $f(M)\le 0$, then $f(x)\le 0$ for all $x$
in $U$.  This implies that $|f|=-f$ is decreasing on $U$. Therefore $|f|$ attains
a maximum at $m$.

The remaining case is that $f(m)<0<f(M)$. Let $U_A$ be the set of all $x$
in $U$ such that $f(x)\le 0$ and let $U_B$ be the set of all $x$ in $U$ such
that $f(x)\ge 0$. Note that $|f|$ is decreasing on $U_A$ and therefore
$f|U_A$ has a maximum at $m$. Similarly, $|f|$ is increasing on $U_B$ and
therefore $f|U_B$ has a maximum at $M$. Since $U=U_A\cup U_B$, it follows
that $|f|$ has a maximum at $m$ or $M$.
\end{proof}

\begin{lemma}\labl{e1l}
Let $x=(E_1,\ldots, E_n)$ be an $(\e,1,\la_b)$-normal block in base $b$.
If $y=(E_1/b,\ldots,E_n/b)$, then
$$D^*(y)\le {1\over b}+\e+\frac {1} {|x|}.$$
\end{lemma}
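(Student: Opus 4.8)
The plan is to convert the combinatorial hypothesis into a statement about digit frequencies, reorder the sequence into an increasing one, and then feed it into the sorted-sequence discrepancy bound of \refl{kn1}, using \refl{mono} to reduce the required maximum to endpoints of constant-value blocks.

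First I would unwind the hypothesis. Since $x=(E_1,\ldots,E_n)$ is $(\e,1,\lambda_b)$-normal and $\lambda_b^{(1)}((j))=b^{-1}$ for each digit $j\in\{0,\ldots,b-1\}$ by \refd{1.10}, the defining inequalities of \refd{1.12} with $m=1$ read, writing $N_j$ for the number of indices $i$ with $E_i=j$ and $n=|x|$,
$$\frac{n}{b}(1-\e)\le N_j\le\frac{n}{b}(1+\e)\qquad(0\le j\le b-1).$$
Summing the lower and upper bounds over $0\le j\le l-1$ controls each partial sum $S_l:=N_0+\cdots+N_{l-1}$ via $\frac{ln}{b}(1-\e)\le S_l\le\frac{ln}{b}(1+\e)$, so that $\left|\frac{S_l}{n}-\frac{l}{b}\right|\le\frac{l}{b}\e\le\e$ since $l\le b$. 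This last estimate is the arithmetic core of the argument.

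Next I would pass to $y$. Every entry of $y=(E_1/b,\ldots,E_n/b)$ lies in $\{0,1/b,\ldots,(b-1)/b\}$, with the value $j/b$ occurring exactly $N_j$ times. Because $D^*$ is independent of ordering, I reorder $y$ into a nondecreasing sequence $z_1\le\cdots\le z_n$ in which the value $j/b$ fills precisely the positions $S_j<i\le S_{j+1}$, and then apply \refl{kn1} to get $D^*(y)\le\frac{1}{2n}+\max_{1\le i\le n}\left|z_i-\frac{2i-1}{2n}\right|$. To control the maximum, I would treat each constant block separately: for $S_j<i\le S_{j+1}$ the quantity $z_i-\frac{2i-1}{2n}=\frac{j}{b}-\frac{2i-1}{2n}$ is affine and monotone in $i$, so by \refl{mono} its modulus is maximized at $i=S_j+1$ or $i=S_{j+1}$. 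At $i=S_j+1$ the expression equals $\frac{j}{b}-\frac{S_j}{n}-\frac{1}{2n}$, of modulus at most $\e+\frac{1}{2n}$; at $i=S_{j+1}$ it equals $\frac{j}{b}-\frac{S_{j+1}}{n}+\frac{1}{2n}$, and writing $\frac{j}{b}-\frac{S_{j+1}}{n}=-\frac{1}{b}+\left(\frac{j+1}{b}-\frac{S_{j+1}}{n}\right)$ bounds its modulus by $\frac{1}{b}+\e+\frac{1}{2n}$. Taking the larger endpoint bound and substituting into \refl{kn1} yields $D^*(y)\le\frac{1}{2n}+\frac{1}{b}+\e+\frac{1}{2n}=\frac{1}{b}+\e+\frac{1}{|x|}$.

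The only delicate point, and the one I would state carefully, is the bookkeeping at the right endpoint $i=S_{j+1}$: there the relevant partial sum is $S_{j+1}$ while the sequence value is still $j/b$, so the mismatch carries the full gap $\frac{1}{b}$ between consecutive values, and one must confirm that the frequency errors do not accumulate beyond $\e$ — which is exactly where the inequality $\frac{j+1}{b}\le 1$ (i.e. $j+1\le b$) is used. The remaining manipulations are routine, and the two $\frac{1}{2n}$ contributions combine to give the stated $\frac{1}{|x|}$ term.
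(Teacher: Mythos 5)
Your proposal is correct and follows essentially the same route as the paper: reorder $y$ into an increasing sequence, apply \refl{kn1}, use monotonicity (\refl{mono}) to reduce the maximum to the endpoints of each constant-value block, and control those endpoints with the $(\e,1,\la_b)$-normality bounds on the digit counts. The only (harmless) difference is that you bound the partial sums $S_l$ directly via $\left|\frac{S_l}{n}-\frac{l}{b}\right|\le\e$ at the exact endpoints, whereas the paper first encloses the endpoints in the interval $[\bar m_j,\bar M_j]$ and evaluates the linear function there.
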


\begin{proof}
We wish to apply \refl{kn1} to bound $D^*(y)$.
However, \refl{kn1} only applies to
increasing sequences in $[0,1)$, so we must first reorder the
sequence~$y$.
Let $z=(z_1,\ldots,z_n)$ be the sequence of values
$E_1/b,\ldots,E_n/b$ written in increasing order.
We note that each~$z_t$ has the form $j/b$ for some~$j$ in the set $\{0,1,\ldots,b-1\}$.
Since~$z$ is an increasing sequence, we may
partition the integers from~$1$ to~$n$ into intervals
$U_0,\ldots, U_{b-1}$ such that $z_t=j/b$ for~$t$ in~$U_j$.
We let~$m_j$ and~$M_j$ be the least and greatest elements of~$U_j$,
respectively.

By \refl{kn1}, we know that $D^*(z)$ is bounded above by
$${1\over 2n}+\max_{1\le t\le n}\left|z_t-{2t-1\over 2n}\right|.$$
Fix~$j$.
Note that ${2t-1\over 2n}$ is an increasing function of
$t$ on $U_j$ and $z_t$ is a constant function of $t$ on $U_j$.
Therefore $z_t-{2t-1\over 2n}$ is a decreasing function of $t$
on $U_j$. So, for each $j$, \refl{mono} shows that the expression
$\left| z_t-{2t-1\over 2n} \right|$ is maximized for
$t=m_j$ or $t=M_j$.

By \refd{1.12}, we know that $x$ is $(\e,1,\la_b)$-normal iff
for all $j$ in~$0,1,\ldots, ~{b-1}$, we have
$$
(1-\e)\ob n \le N((j),x) \le (1+\e) \ob n.
$$

Thus,
$$
m_j=
\left( \sum_{t=0}^{j-1} N((t),x) \right)+1\ge
\left( \sum_{t=0}^{j-1} (1-\e) \ob n) \right)+1
=j(1-\e)\ob n+1:=\bar m_j
$$

and
$$
M_j=
\sum_{t=0}^j N((t,x)\le
\sum_{t=0}^j (1+\e) \ob n = (j+1)(1+\e) \ob n:=\bar M_j.
$$

Letting
$$f_j(x)=\left({j\over b}-{2x-1\over 2n}\right),$$
we see that
$$
D^*(y) \leq
{1\over 2n}+\max_{1\le t\le n}\left|z_t-{2t-1\over 2n}\right|
=\frac {1} {2n}+ \max_{0 \le j \le b-1} \max \left( \left| f_j(m_j) \right|, \left|f_j(M_j) \right| \right).
$$

Obviously, $f$ is a monotone function. Note that
$\bar m_j\le m_j\le M_j\le \bar M_j$. By \refl{mono},
the maximum of $|f_j(x)|$ on
$[\bar m_j,\bar M_j]$ occurs at $\bar m_j$ or $\bar M_j$.
Therefore
$$\max\{|f_j(m_j)|,|f_j(M_j)|\}\le \max\{|f_j(\bar m_j)|,|f_j(\bar M_j)|\}.$$
Note that
$$
|f_j(\bar m_j)|
= \left| \frac {j} {b} - \frac {2\left(j(1-\e) \ob n+1\right)-1} {2n} \right|
$$
$$
= \left| \frac {2nj-2j(1-\e)n+b} {2nb}\right|
= \left| \frac {2nj\e+b} {2nb}\right|
=\frac {j\e} {b}+ \frac {1} {2n}.
$$

Similarly, note that
$$
|f_j(\bar M_j)|
= \left| \frac {j} {b} - \frac {2(j+1)(1+\e)\ob n-1} {2n} \right|
$$
$$
=\left| \frac {2nj-2nj-2nj\e-2n-2n\e+b} {2nb} \right| 
$$
$$
\leq \left| \frac {-2nj\e-2n-2n\e} {2nb} \right| + \left| \frac {b} {2nb} \right| = \frac {j+1} {b} \e + \ob + \frac {1} {2n}.
$$

Thus $\max(|f_j(\bar m_j)|,|f_j(\bar M_j)|) \leq \frac {j+1} {b} \e + \ob + \frac {1} {2n}$ and we see that
$$
D^*(y) \leq \frac {1} {2n}+\max_{0 \leq j \leq b-1} \left( \frac {j+1} {b} \e + \ob + \frac {1} {2n} \right)
=\frac {1} {2n}+\left(\frac {b} {b} \e+ \ob+\frac {1} {2n} \right)
=\e+\ob+\frac {1} {|x|}.
$$\qed
\end{proof}

By \refl{e1l}, we know that $D^*(y_i)$ is bounded above by
$$\e_i':={1\over b_i}+\e_i+{1\over |x_i|}.$$

Given a positive integer~$n$, let $m=n-L_i$.
Note that $m$ can be written uniquely as $\al |x_{i+1}|+\be$
with $0\le\al\le l_{i+1}$ and $0\le \be< |x_{i+1}|$. We define $\alpha$
and $\beta$ as the unique integers satisfying these conditions.

Let $y=l_1 y_1 l_2 y_2\ldots $ and recall that $D^*(z)$ is bounded above by $1$ for all finite sequences $z$ of real numbers in $[0,1)$.
By \refc{kn2},
$$
D_n^*(y) \leq
f_i(\al,\be):=
\frac {l_1|x_1|\e_1'+\ldots+l_i |x_i| \e_i'+(|x_{i+1}|\e_{i+1}')\al+\be} {l_1|x_1|+\ldots+l_i |x_i|+|x_{i+1}|\al+\be}.
$$

Note that $f_i(\al,\be)$ is a rational function in  $\al$ and $\be$.
We consider the domain of $f_i$ to be
$\mathbb{R}_0^+ \times \mathbb{R}_0^+$ where $\mathbb{R}_0^+$ is the set
of all non-negative real numbers.
Now we give an upper bound for $D_n^*(y)$.
Since $D_n^*(y)$ is at most $f_i(\al,\be)$, it is enough to bound
$f_i(\al,\be)$ from above on $[0,l_{i+1}]\times [0,|x_{i+1}|]$.

\begin{lemma}\labl{boundf}
If $l_i>0$, $|x_i|>0$, $\e_{i+1}'<1$,
\begin{equation}\label{eqn:four}
l_1|x_1|+\ldots+l_{i-1}|x_{i-1}|>l_1|x_1|\e_1'+\ldots+l_{i-1}|x_{i-1}|\e_{i-1}',
\end{equation}
\begin{equation}\label{eqn:five}
\frac {|x_{i+1}|} {l_i |x_i|} < \frac {1-\e_i'} {\e_{i+1}'}
\end{equation}
and
$$
(w,z)\in \{0,\ldots,l_{i+1}\}\times\{0,\ldots,|x_{i+1}|-1\},
$$
then
$$f_i(w,z)<f_i(0,|x_{i+1}|)=\frac {l_1|x_1|\e_1'+\ldots+l_i |x_i| \e_i'+|x_{i+1}|} {l_1|x_1|+\ldots+l_i |x_i|+|x_{i+1}|}.$$
\end{lemma}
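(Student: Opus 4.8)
The plan is to regard $f_i$ as a rational function of the two real variables $\al$ and $\be$ on the rectangle $[0,l_{i+1}]\times[0,|x_{i+1}|]$ and to show that it attains its maximum, strictly, at the corner $(\al,\be)=(0,|x_{i+1}|)$; since every admissible lattice point $(w,z)$ with $0\le w\le l_{i+1}$ and $0\le z\le |x_{i+1}|-1$ lies in this rectangle and differs from that corner, the claimed strict inequality follows. To lighten notation, abbreviate $A=\sum_{j=1}^i l_j|x_j|\e_j'$ and $B=\sum_{j=1}^i l_j|x_j|$, so that
$$f_i(\al,\be)=\frac{A+|x_{i+1}|\e_{i+1}'\,\al+\be}{B+|x_{i+1}|\,\al+\be},$$
and note $B>0$ because $l_i|x_i|>0$. (If $|x_{i+1}|=0$ the asserted range of $z$ is empty and there is nothing to prove, so I may assume $|x_{i+1}|\ge 1$.)

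First I would treat the $\be$-direction. Writing $N$ and $D$ for the numerator and denominator of $f_i$, a direct computation gives $\partial f_i/\partial\be=(D-N)/D^2$, so $f_i$ is strictly increasing in $\be$ precisely when $f_i<1$, i.e.\ when $N<D$. This reduces to $A+|x_{i+1}|\e_{i+1}'\,\al<B+|x_{i+1}|\,\al$, which holds for all $\al\ge0$ once $A<B$ and $\e_{i+1}'<1$ are known. The bound $\e_{i+1}'<1$ is a hypothesis, while $A<B$ follows by splitting off the $i$-th term: the partial sums satisfy $\sum_{j<i}l_j|x_j|\e_j'<\sum_{j<i}l_j|x_j|$ by \refeq{four}, and $l_i|x_i|\e_i'<l_i|x_i|$ because $\e_i'<1$. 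That last fact is forced by \refeq{five}, whose left side is nonnegative and whose factor $\e_{i+1}'$ is positive, so its right side $(1-\e_i')/\e_{i+1}'$ must be positive, giving $\e_i'<1$. Hence $f_i<1$ on the whole rectangle and $f_i$ is strictly increasing in $\be$, so $f_i(w,z)<f_i(w,|x_{i+1}|)$ whenever $z<|x_{i+1}|$.

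Next I would treat the $\al$-direction, which I expect to be the main obstacle. Setting $p=|x_{i+1}|\e_{i+1}'$ and $q=|x_{i+1}|$, one has $\partial f_i/\partial\al=(pD-qN)/D^2$, and expanding shows its sign is that of $pB-qA+(p-q)\be$. Since $p<q$ (because $\e_{i+1}'<1$ and $q\ge1$), the term $(p-q)\be$ is $\le0$, so it suffices to show $pB-qA\le0$, equivalently $\e_{i+1}'\le A/B$. The key observation is that the sequence $\e_j'=1/b_j+\e_j+1/|x_j|$ is non-increasing, since $b_j$ and $|x_j|$ are non-decreasing and $\e_j$ is decreasing; consequently $\e_j'\ge\e_i'\ge\e_{i+1}'$ for every $j\le i$, whence
$$A=\sum_{j=1}^i l_j|x_j|\e_j'\ \ge\ \e_i'B\ \ge\ \e_{i+1}'B.$$
This is exactly $pB-qA\le0$, so $f_i$ is non-increasing in $\al$ and therefore $f_i(w,|x_{i+1}|)\le f_i(0,|x_{i+1}|)$ for all $w\ge0$.

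Combining the two monotonicity statements yields, for every admissible $(w,z)$ with $z\le|x_{i+1}|-1<|x_{i+1}|$,
$$f_i(w,z)<f_i(w,|x_{i+1}|)\le f_i(0,|x_{i+1}|),$$
and evaluating the right-hand side at $(\al,\be)=(0,|x_{i+1}|)$ recovers the stated closed form for $f_i(0,|x_{i+1}|)$. The subtlety I would be most careful about is the bookkeeping of strictness: \emph{all} of the strict inequality in the conclusion is supplied by the $\be$-direction, where $f_i<1$ is strict, so the $\al$-direction needs only the non-strict bound $pB-qA\le0$. This is what allows the argument to survive the borderline case in which several of the $\e_j'$ coincide, and it is why \refeq{five} enters only through the inequality $\e_i'<1$ rather than through its full strength.
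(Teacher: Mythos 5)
Your proposal is correct and follows essentially the same route as the paper: both treat $f_i$ as a rational function of the two variables, compute the partial derivatives, observe that their signs are constant in the respective variables, and conclude by monotonicity that the maximum over the admissible lattice points is bounded by the value at the corner $(0,|x_{i+1}|)$. The only (harmless) difference is in bookkeeping: the paper establishes strict monotonicity in both directions, using \refeq{five} at full strength for the $z$-direction, whereas you extract only $\e_i'<1$ from \refeq{five}, settle for non-strict monotonicity in $\al$, and let the $\be$-direction carry all of the strictness.
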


\begin{proof}
To bound~$f_i(w,z)$, we first compute its partial derivatives
$\fsz (w,z)$ and $\fsw (w,z)$.  We will show that $\fsw (w,z)$ is always negative, while $\fsz (w,z)$ is always positive. Note that this is enough to prove \refl{boundf}
 since $0\le\al$ and $\be<|x_{i+1}|$.

First, we note that $f_i (w,z)$ is a rational function of $w$ and $z$ of the form
$$
f_i (w,z)=\frac {C+Dw+Ez} {F+Gw+Hz}
$$
where
$$
C=l_1|x_1|\e_1'+\ldots+l_i|x_i|\e_i', \ D=|x_{i+1}|\e_{i+1}', \ E=1,
$$
\begin{equation}\label{eqn:ledit2}
F=l_1|x_1|+\ldots+l_i|x_i|, \ G=|x_{i+1}| \ \textrm{and \ } H=1.
\end{equation}

Therefore
$$
\frac {\partial f_i} {\partial w} (w,z)=\frac {D(F+Gw+Hz)-G(C+Dw+Ez)} {(F+Gw+Hz)^2}=\frac {D(F+Hz)-G(C+Ez)} {(F+Gw+Hz)^2};
$$
$$
\frac {\partial f_i} {\partial z} (w,z)=\frac {E(F+Gw+Hz)-H(C+Dw+Ez)} {(F+Gw+Hz)^2}=\frac {E(F+Gw)-H(C+Dw)} {(F+Gw+Hz)^2}.
$$
Thus, the sign of $\frac {\partial f_i} {\partial w} (w,z)$ does not depend on $w$ and the sign of $\frac {\partial f_i} {\partial z} (w,z)$ does not depend on $z$.
We will show that $f_i(w,z)$ is a
decreasing function of $w$ by proving that
\begin{equation}\label{eqn:one}
D(F+Hz)<G(C+Ez).
\end{equation}
Similarly, we show that $f_i (w,z)$ is an increasing function of $z$ by verifying that
\begin{equation}\label{eqn:two}
E(F+Gw)>H(C+Dw).
\end{equation}
Substituting the values in \refeq{ledit2} into \refeq{one}, we see that
$$
|x_{i+1}|\e_{i+1}'(l_1|x_1|+\ldots+l_i|x_i|+z)<|x_{i+1}|(l_1|x_1|\e_1'+\ldots+l_i|x_i|\e_i'+z).
$$
Since $|x_{i+1}| \ge |x_i| >0$, we may divide both sides by $|x_{i+1}|$ to obtain
\begin{equation}\label{eqn:three}
l_1|x_1|\e_{i+1}'+\ldots+l_i|x_i|\e_{i+1}'+z \epsilon_{i+1}' < l_1|x_1|\e_1'+\ldots+l_i|x_i|\e_i'+z.
\end{equation}
So, we only have to show \refeq{three}, which is true since
$$\e_i'={1\over b_i}+\e_i+\frac {1} {|x_i|}$$
is decreasing and $\e_{i+1}'<1$.

Also, by substituting the values in \refeq{ledit2} into \refeq{two}, we see that
$$
(l_1|x_1|+\ldots+l_{i-1}|x_{i-1}|)+(l_i|x_i|+w|x_{i+1}|)
>(l_1|x_1|\e_1'+\ldots+l_{i-1}|x_{i-1}|\e_{i-1}')+(l_i|x_i|\e_i'+|x_{i+1}|\e_{i+1}').
$$
By condition \refeq{four} we know that
$$
l_1|x_1|+\ldots+l_{i-1}|x_{i-1}|>l_1|x_1|\e_1'+\ldots+l_{i-1}|x_{i-1}|\e_{i-1}'.
$$
Therefore it is enough to show
$$
l_i|x_i|+w|x_{i+1}|>l_i|x_i|\e_i'+|x_{i+1}|\e_{i+1}'.
$$
Since $l_i |x_i|$ is the smallest possible value of $l_i|x_i|+w|x_{i+1}|$ for non-negative $w$, we need only show that
$$
l_i|x_i|>l_i|x_i|\e_i'+|x_{i+1}|\e_{i+1}'.
$$
By routine algebra, this is equivalent to
$$
\frac {|x_{i+1}|} {l_i |x_i|} < \frac {1-\e_i'} {\e_{i+1}'},
$$
which is true by \refeq{five}.\qed
\end{proof}

Set
$$\bar\e_i=
f_i(0,|x_{i+1}|)
=
\frac {l_1|x_1|\e_1'+\ldots+l_i |x_i| \e_i'+|x_{i+1}|} {l_1|x_1|+\ldots+l_i |x_i|+|x_{i+1}|}.
$$

\begin{lemma}\labl{e0}
$\lim_{n\to\infty}\bar\e_{i(n)}=0$.
\end{lemma}

\begin{proof}
We write~$i$ for $i(n)$ throughout. For $i$ large enough, we have
$$
\frac {l_1|x_1|\e_1'+\ldots+l_i |x_i| \e_i'+|x_{i+1}|} {l_1|x_1|+\ldots+l_i |x_i|+|x_{i+1}|}<\frac {l_1|x_1|\e_1'+\ldots+l_i |x_i| \e_i'+|x_{i+1}|} {l_i |x_i|}
$$
$$
=\frac {l_1|x_1|\e_1'+\ldots+l_{i-1} |x_{i-1}| \e_{i-1}'} {l_i |x_i|}+\e_i'+\frac {|x_{i+1}|} {l_i |x_i|}<\frac {l_{i-1}|x_{i-1}|} {l_i |x_i|}\cdot i \cdot \e_{i-1}'+\e_i'+\frac {|x_{i+1}|} {l_i |x_i|},
$$
where the last inequality uses the fact that $\e_i'$ is decreasing.
Note that
$\frac {l_{i-1}|x_{i-1}|} {l_i |x_i|}\cdot i \cdot \e_{i-1}'\to 0$ by \refeq{nice1},
$\e_i'={1\over b_i}+\e_i + \frac {1} {|x_i|} \to 0$
and $\frac {|x_{i+1}|} {l_i |x_i|}\to 0$ by \refeq{nice2}.
Therefore $\lim_{i \rightarrow \infty} \bar \e_i=0$.  Since $i$ can be made arbitrarily large by choosing large enough $n$, the lemma follows.\qed
\end{proof}

\begin{theorem}\labt{mqd}
If~$V$ is  an $MFF$ and $\{x_i\}_{i=1}^\infty$ is
a $V$-nice sequence, then~$x$ is $Q$-distribution normal.
\end{theorem}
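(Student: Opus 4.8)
The plan is to assemble the convergence $D_n^*(y)\to 0$ out of \refl{boundf} and \refl{e0}, and then to invoke \reft{Salat} in order to pass from the uniform distribution of $\{E_n/q_n\}$ to the $Q$-distribution normality of $x$. The key observation is that $y=l_1y_1l_2y_2\cdots$ is exactly the sequence $\{E_n/q_n\}_{n=1}^\infty$ figuring in \reft{Salat}, so a star-discrepancy estimate for $y$ is precisely an estimate for the digit-ratio sequence that governs $Q$-distribution normality.

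First I would recall, for each $n$ with $i=i(n)$, the bound $D_n^*(y)\le f_i(\al,\be)$ obtained from \refc{kn2}, and verify that \refl{boundf} applies for all large $i$. Since $\e_i'=1/b_i+\e_i+1/|x_i|\to 0$, we have $\e_{i+1}'<1$ and $(1-\e_i')/\e_{i+1}'\to\infty$ for large $i$; because $|x_{i+1}|/(l_i|x_i|)\to 0$ by \refeq{nice2}, condition \refeq{five} holds eventually, and \refeq{four}, which is equivalent to $\sum_{j<i}l_j|x_j|(1-\e_j')>0$, holds for large $i$ since $\e_j'<1$ for large $j$ while $l_j|x_j|$ is non-decreasing. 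Hence for large $n$, \refl{boundf} at $(w,z)=(\al,\be)$ yields $D_n^*(y)\le f_i(\al,\be)\le f_i(0,|x_{i+1}|)=\bar\e_i$.

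By \refl{e0} we then have $\bar\e_{i(n)}\to 0$, so $D_n^*(y)\to 0$; since a sequence is u.d. mod $1$ exactly when its star discrepancy tends to $0$, the sequence $\{E_n/q_n\}$ is u.d. mod $1$. To apply \reft{Salat} I would check its averaging hypothesis $\frac1N\sum_{n=1}^N 1/q_n\to 0$, which follows from $q_n\to\infty$ because the Ces\`aro means of a null sequence are null. \reft{Salat} then gives that $x$ is $Q$-distribution normal.

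The main obstacle lies in this final bridge, and it hinges on the bases growing, $b_i\to\infty$ (as is explicitly demanded of a BFF in \refd{1.13}, though not listed in the $MFF$ definition). This growth is exactly what makes the averaging hypothesis of \reft{Salat} valid; were the non-decreasing sequence $b_i$ eventually constant, that hypothesis would fail and the conclusion could too, since u.d.\ of $\{E_n/q_n\}$ would then reduce to simple normality in a fixed base, which does not force full distribution normality. Thus the essential move is to deploy $b_i\to\infty$ to convert the discrepancy bound for $\{E_n/q_n\}$, furnished by \refl{boundf} and \refl{e0}, into genuine $Q$-distribution normality through \reft{Salat}.
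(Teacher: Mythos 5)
Your proof follows the paper's argument exactly: bound $D_n^*(y)$ by $f_i(\al,\be)\le\bar\e_i$ via \refc{kn2} and \refl{boundf}, send this bound to zero with \refl{e0}, and conclude by \reft{Salat}. You go slightly beyond the paper in explicitly verifying the averaging hypothesis $\frac{1}{N}\sum_{n=1}^N 1/q_n\to 0$ of \reft{Salat} and in observing that this step (and likewise the claim $\e_i'\to 0$ used in \refl{e0}) requires $b_i\to\infty$, a condition the $MFF$ definition omits even though it is used implicitly throughout; this is a genuine gap in the paper's statement that your version correctly flags and repairs.
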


\begin{proof}
By \reft{Salat}, it is enough to show that $D_n^*(y) \rightarrow 0$.
Since $x_i$ is $(\e_i,1,\la_{b_i})$-normal, we see that
$D^*(y_i)\le\e_i'$ by \refl{e1l}.
We wish to apply \refc{kn2} and for large enough~$i$ apply \refl{boundf}
as well. To apply \refl{boundf} for large $i$, we need
only prove several inequalities for large $i$.
In applying these inequalities, we will have $i=i(n)$ as defined
in \refeq{idef}, so it is worth noting that~$i$ may be chosen as large
as one likes by choosing a large enough~$n$.

For the first inequality, note that
$\lim_{i\to\infty}l_i|x_i|=\infty$. For large
enough~$i$, the product $l_i|x_i|$ is nonzero.
For the second, we have $|x_i|>0$.
For the third inequality, $\e_{i+1}'<1$ for large enough $i$ as $\e_i' \rightarrow 0$.
Next, since $l_{i-1}|x_{i-1}|$ asymptotically dominates
$l_{i-1}|x_{i-1}|\e_{i-1}'$,
it follows that
$l_1|x_1|+\ldots+l_{i-1}|x_{i-1}|$
asymptotically dominates
$l_1|x_1|\e_1'+\ldots+l_{i-1}|x_{i-1}|\e_{i-1}'$ as well.
In particular, for large enough~$i$, we have
\begin{equation}
l_1|x_1|+\ldots+l_{i-1}|x_{i-1}|>l_1|x_1|\e_1'+\ldots+l_{i-1}|x_{i-1}|\e_{i-1}'.
\end{equation}

Finally, for the fifth inequality, noting that
$$\lim_{i\to\infty}{|x_{i+1}|\over l_i|x_i|}=0$$ and
that
$$\lim_{i\to\infty}{1-\e_i'\over \e_{i+1}'}=\infty$$
since $\lim_{i\to\infty}\e_i'=0$, we see that
$$
{|x_{i+1}|\over l_i|x_i|}<
{1-\e_i'\over \e_{i+1}'}
$$
for large~$i$.

So, for large enough~$i$, $D_n^*(y)\le \bar\e_i$ and
$\lim_{i\to\infty}\bar\e_i=0$. Thus $\lim_{n\to\infty}D_n^*(y)=0$.  \qed
\end{proof}

Let $C_{b,w}$ be the block formed by concatenating all the blocks of length $w$ in base $b$ in lexicographic order.  For example,
$$
C_{3,2}=1(0,0)1(0,1)1(0,2)1(1,0)1(1,1)1(1,2)1(2,0)1(2,1)1(2,2)
$$
$$
=(0,0,0,1,0,2,1,0,1,1,1,2,2,0,2,1,2,2).
$$
Let $x_1=(0,1)$, $b_1=2$ and $l_1=0$.  For $i \geq 2$, let $x_i=C_{i,i^2}$, $b_i=i$ and $l_i=i^{3i}$.
It was shown in \cite{Mance} that $x$ is $Q$-normal. We now show that $x$ is $Q$-distribution
normal.

\begin{theorem}\labt{qde}
Let $x_1=(0,1)$, $b_1=2$ and $l_1=0$.
If for $i\ge 2$, we let $x_i=C_{i,i^2}$, $b_i=i$,
and $l_i=i^{3i}$, 
then~$x$ is $Q$-distribution normal.
\end{theorem}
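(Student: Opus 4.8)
The plan is to obtain the conclusion as an immediate consequence of \reft{mqd}, whose hypotheses are that $V=\{(l_i,b_i,\e_i)\}$ is an $MFF$ and that $\{x_i\}$ is a $V$-nice sequence. The sequences $\{l_i\}$ and $\{b_i\}$ given here are already non-decreasing with $b_i\ge 2$, so to build an $MFF$ I need only adjoin a strictly decreasing sequence $\{\e_i\}$ in $(0,1)$ with $\lim_{i\to\infty}\e_i=0$; I will simply take $\e_i=1/(i+1)$. Everything then reduces to checking that the given $\{x_i\}$ satisfies the definition of $V$-nice for this $V$.

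The first real step is to verify that each $x_i$ is $(\e_i,1,\la_{b_i})$-normal. This rests on a clean counting fact about the block $C_{b,w}$, which is the concatenation of all $b^w$ words of length $w$ over $\{0,\ldots,b-1\}$: its length is $w\cdot b^w$, and for every symbol $s$ the number of occurrences of the one-digit block $(s)$ is exactly $w\cdot b^{w-1}$, since fixing one of the $w$ coordinates equal to $s$ leaves the remaining $w-1$ coordinates free. As $\la_b^{(1)}(s)\cdot|C_{b,w}|=\ob\cdot w b^w=w\,b^{w-1}$, the defining inequalities of $(\e,1,\la_b)$-normality hold with equality for every $\e$; in particular $x_i=C_{i,i^2}$ is $(\e_i,1,\la_{i})$-normal for $i\ge 2$, and $x_1=(0,1)$ is $(\e_1,1,\la_2)$-normal by inspection. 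Since $|x_i|=i^{i^2+2}$ for $i\ge 2$, the lengths are non-decreasing and tend to $\infty$, so the standing hypotheses on $\{x_i\}$ in the definition of $V$-nice are met and no $\e$-management is needed.

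It remains to verify the two asymptotic conditions \refeq{nice1} and \refeq{nice2}, which is where essentially all of the genuine work lies. Writing $l_i=i^{3i}$ and $|x_i|=i^{i^2+2}$, I would pass to logarithms and isolate the dominant terms. For \refeq{nice1}, since $l_{i-1}/l_i\le 1$ it suffices that $i\cdot|x_{i-1}|/|x_i|\to 0$, and the estimate $\log(|x_{i-1}|/|x_i|)=((i-1)^2+2)\log(i-1)-(i^2+2)\log i\sim -2i\log i$ shows this ratio decays faster than any power of $i$. For \refeq{nice2}, the analogous expansion gives $\log\big(|x_{i+1}|/(l_i|x_i|)\big)=((i+1)^2+2)\log(i+1)-(i^2+3i+2)\log i=-i\log i+i+O(\log i)\to-\infty$, so $\frac{1}{l_i}\cdot\frac{|x_{i+1}|}{|x_i|}\to 0$; the substantive point here is that $l_i=i^{3i}$ is large enough to overpower the super-exponential growth $|x_{i+1}|/|x_i|\approx i^{2i}$. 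Once both limits are confirmed, $\{x_i\}$ is $V$-nice and \reft{mqd} yields that $x$ is $Q$-distribution normal. The only delicate part of the argument is the bookkeeping in these two logarithmic limits; the exact simple uniformity of $C_{b,w}$ makes the normality hypothesis entirely automatic.
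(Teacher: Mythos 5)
Your proposal is correct and follows the same route as the paper: construct the $MFF$, check that $\{x_i\}$ is $V$-nice, and invoke \reft{mqd}. The only difference is that where the paper cites \cite{Mance} both for the $(\e_i,1,\la_{b_i})$-normality of $C_{i,i^2}$ and for the growth conditions (via the implications \refeq{good2}$\Rightarrow$\refeq{nice1} and \refeq{good3}$\Rightarrow$\refeq{nice2}), you verify these directly, and your verifications --- the exact count $N((s),C_{b,w})=w\,b^{w-1}$ giving normality with equality, and the two logarithmic estimates for \refeq{nice1} and \refeq{nice2} --- are all accurate.
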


\begin{proof}
We let $\e_1=3/5$. For $i\ge 2$, we let $\e_i=1/i$.
By \cite{Mance}, we know that $x_i$ is $(\e_i,1,\la_{b_i})$-normal.
It is enough to show \refeq{nice1} and \refeq{nice2}.
Note that trivially, \refeq{good2} implies \refeq{nice1} and \refeq{good3} implies
\refeq{nice2}. Moreover, it was proven in \cite{Mance} that \refeq{good2} and \refeq{good3}
hold.
Therefore $x_i$ is $V$-nice. So, by \reft{mqd}, we see that~$x$ is
$Q$-distribution normal as claimed.\qed
\end{proof}

\begin{acknowledgements}
We thank the referees for their many valuable suggestions.  We would also like to thank  Vitaly Bergelson for pointing us in the direction of this problem and Marc Carnovale and Laura Harvey for their help in editing this paper.
\end{acknowledgements}


\begin{thebibliography}{HD}

\normalsize
\baselineskip=17pt

\bibitem{AKS} Adler, R., Keane, M., Smorodinsky, M. : A construction of a normal number for the continued fraction transformation. J. Number Theory {\bf 13}, 95--105 (1981)

\bibitem{Besicovitch} Besicovitch, A. S. :  The asymptotic distribution of the numerals in the decimal representation of the squares of the natural numbers. Math. Zeit. {\bf 39}, 146--156 (1934)

\bibitem{Champernowne} Champernowne, D. G. :  The construction of decimals normal in the scale of ten. Journal of the London Mathematical Society {\bf 8}, 254--260 (1933)

\bibitem{DT} Drmota, M., Tichy, R. F. : Sequences, Discrepancies and Applications. Springer-Verlag, Berlin Heidelberg (1997)

\bibitem{HT} Han\v{c}l J., Tijdeman R. : On the irrationality of Cantor series, J. reine angew Math. {\bf  571},  145--158 (2004)

\bibitem{KuN} Kuipers, L., Niederreiter, H. :  Uniform Distribution of Sequences. Dover, Mineola, NY (2006)

\bibitem{Laffer} Lafer, P. :  Normal numbers with respect to Cantor series representation. Washington State University, Pullman, Washington (1974)

\bibitem{Mance} Mance, B. :  Construction of normal numbers with respect to the Q-Cantor series representation for certain $Q$. submitted, available at http://arxiv.org/abs/0911.1485

\bibitem{Post} Postnikov, A. G. :  Ergodic problems in the theory of congruences and of diophantine approximations. Trudy Mat. Inst. Steklov {\bf 82} (1966); English transl. Proc. Steklov Inst. Math. {\bf 82} (1967).

\bibitem{PostPyat}Postnikov,A. G., Pyatetskii,I. I. :  A Markov-sequence of symbols and a normal continued fraction. Izv. Akad. Nauk SSSR Ser. Mat. {\bf 21}:6, 729--746 (1957)

\bibitem{Renyi} R\'enyi,A. :  On the distribution of the digits in Cantor's series. Mat. Lapok {\bf 7}, 77--100 (1956)

\bibitem{Salat} \u Sal\'at,T. :  Zu einigen Fragen der Gleichverteilung (Mod 1). Czechoslovak Math. J. {\bf 18} (93), 476--488 (1968)

\bibitem{Sch} Schweiger, F. : \"{U}ber den Satz von Borel-R\'enyi in der Theorie der Cantorschen Reihen. Monats. Math. {\bf 74}, 150--153 (1969)

\end{thebibliography}
\end{document}